\newtheorem{theorem}{Theorem}[section]
\newtheorem{defn}[theorem]{Definition}
\newtheorem{cor}[theorem]{Corollary}
\newtheorem{ex}[theorem]{Example}
\newtheorem{lem}[theorem]{Lemma}
\newtheorem{prop}[theorem]{Proposition}
\theoremstyle{definition}
\newtheorem{rem}[theorem]{Remark}
\def\Ham{\mathrm{Ham}} 
\def\ov{\overline}  \def\wt{\widetilde} 
\def\Z{\mathbb{Z}}  \def\R{\mathbb{R}} \def\C{\mathbb{C}}   \def\J{\mathcal{J}} \def\P{\mathcal{P}} \def\H{\mathcal{H}}  \def\B{\mathcal{B}}     \def\H{\mathcal{H}} \def\L{\mathcal{L}} \def\C{\mathbb{C}} \def\E{\mathcal{E}}  \def\M{\mathcal{M}}    
  \def\om{\omega}
\def\delbar{\ov{\partial}} \def\bs{\backslash}
\def\hra{\hookrightarrow}  \def\la{\langle} \def\ra{\rangle} \def\LA{\left\langle} \def\RA{\right\rangle} \def\sra{\rightarrow} \def\lra{\longrightarrow}  
\def\codim{\text{codim}}  \def\T{\mathbb{T}}
\begin{document}

\title{Lagrangian Circle actions.}

\author{Cl\'ement Hyvrier}
\address{
C\'egep Saint-Laurent\\
D\'epartement de Math\'ematiques\\
625 avenue Sainte-Croix\\
Montreal, QC, H4L 3X7\\
Canada}
\email{
chyvrier@cegep-st-laurent.qc.ca
}

\maketitle

%\section{Table of content}

%\begin{itemize}
%\item[1)] Toric Manifolds.
%\item[2)] Quantum Lagrangian Homology.
%\item[4)] Proofs of the theorems. 
%\end{itemize}
%

\begin{abstract} We consider paths of Hamiltonian diffeomorphism preserving a given compact monotone Lagrangian in a symplectic manifold that extend to an $S^1$--Hamiltonian action. We compute the leading term of the associated Lagrangian Seidel element. We show that such paths minimize the Lagrangian Hofer length. Finally we apply these computations to Lagrangian uniruledness and to  give a  nice presentation of the Quantum cohomology of real lagrangians in Fano symplectic toric manifolds.
\end{abstract}

\section{Introduction}  Let  $(M^{2n},\om)$  denote a symplectic manifold and let $L$ be a compact connected Lagrangian in $M$. Here, we will consider  exact Lagrangian loops of $L$. 
 Consider the  set of Hamiltonian  isotopies starting at the identity and with ending point a Hamiltonian diffeomorphism preserving $L$:
 $$\P_L\Ham(M,\om):=\{\gamma:[0,1]\stackrel{C^{\infty}}{\lra} \Ham(M,\om)|\gamma_0=id,\quad \gamma_1(L)=L\}.$$ 
These are  the paths  generating  exact Lagrangian loops of $L$ (see M.Akveld and D.Salamon \cite{AkveldSalamon}).  
 Similarly to loops of Hamiltonian diffeomorphisms, such paths define automorphisms of the Lagrangian quantum homology of $L$ when defined (see Hu-Lalonde-Leclerq \cite{HuLalondeLeclercq}).  Any such automorphism can be seen as multiplication by an invertible element of the lagrangian quantum homology called \emph{Lagrangian Seidel element}. 
     
 For weakly exact lagrangians it has been shown in \cite{HuLalondeLeclercq} that  the Lagrangian Seidel morphism is always trivial, hence the Seidel element is  simply given by the fundamental class of $L$, when defined.

 In this paper we are interested in computing Lagrangian Seidel elements  for those paths admitting extensions to a loop of Hamiltonian diffeomorphisms coming from $S^1$--action on $(M,\om)$.  In other words for the elements in $\P_L\Ham(M,\om)$ that are homotopic to paths which, when squared, yield an $S^1$--Hamiltonian action on the symplectic manifold. 
  
To ensure that all the automorphisms we want to compute are well-defined we will assume that $(M,L)$ is \emph{monotone}. If $\mu_L:\pi_2(M,L)\sra \Z$ denotes the Maslov index and if $I_{\om}:\pi_2(M,L)\sra \R$ is the $\om$--valuation then monotonicity means that 
\[\begin{cases} I_{\om}=\lambda \mu_L &\text{ $\lambda>0$}; \\
N_L:=\inf_{ \pi_2(M,L)}\{\mu_L(A)|A\neq 0\}\geq 2 &.\end{cases}  \]
%These assumptions guaranty  that the various versions of  quantum homology   of $L$ are well-defined (cf. Paul Biran and Octav Cornea \cite{BC}).   

In this framework we will compute the leading  term of the Seidel element.  We will also show that in some cases all the other terms vanish. For instance, this is the case for the (monotone) totally real lagrangians in toric manifolds. These computations can be seen  as the relative counter-part of the computation done  by McDuff and Tolmans  for the Seidel elements of an $S^1$--Hamiltonian action on $M$  \cite{McDuffTolman}.

These calculations imply that such Hamiltonian paths cannot define null-homotopic exact Lagrangian loops.  We will further show that for paths giving  $S^1$--Hamiltonian actions when squared,  the Lagrangian Hofer length is minimized, hence they define relative geodesics in their homotopy class with fixed endpoints. This is not that surprising considering that  the  obtained Hamiltonian loops define a geodesics in their homotopy class as shown by D.McDuff and J.Slimowitz \cite{McDuffSlimowitz}.  We point out that such results can be useful to study Lagrangian uniruling defined by P.Biran and O.Cornea \cite{BiranCorneaUniruling}. The main class of examples for  which we concretely apply the calculations mentioned above are the \emph{real Lagrangians} in Fano symplectic  toric manifolds, that is a symplectic manifold $(M^{2n},\om)$ with a Hamiltonian action of $\T^n$ with some positivity assumption. These Lagrangians are the fixed points set of the unique anti-symplectic involution preserving the moment map of the torus-action. Under a monotonicity assumption,  L.Haug \cite{Haug} showed  that these Lagrangian submanifolds are wide with respect to  $\Z_2$--Laurent polynomials coefficient ring. This means that  the corresponding Lagrangian Quantum homology must split as a product of the $\Z_2$--Morse homology of $L$ with the coefficient ring.  We will show that the multiplicative quantum relations of $L$ are generated by Lagrangian Seidel elements. This can be seen as a relative version of the observation made by D.McDuff and S.Tolman in \cite{McDuffTolman}.  Using L.Haug's result we then provide a description of the Lagrangian Quantum homology as quotient of some polynomial ring exactly analogous to that given in the absolute case.

\subsubsection{Formulation of the  main result}  We need to introduce Lagrangian Quantum homology. Roughly speaking this is the  homology theory obtained by  deforming  the Morse differential on $L$  taking into account pseudo-holomorphic disks in $M$ with boundary in $L$. More precisely, this is the homology of the  \emph{pearl complex} 
$$C(L; f,g_L;J;\Lambda_L):=(R\la Crit f\ra\otimes \Lambda_L,d_Q),$$
where $(f,g_L)$ is a Morse-Smale pair for $L$,   $\Lambda_L:=R[q^{-1},q]$ is the ring of  $R$--Laurent polynomials graded by requiring that $|q|=1$ and where  the differential $d_Q$ can be written a sum of $\Lambda_L$-linear maps
$$d_Q=d_0+d_1 \otimes q^{-N_L}+d_2\otimes q^{-2N_L}+...$$
where $d_0$ stands for the Morse differential of $f$ and
\begin{equation*} d_k: R\la Crit_r(f)\ra \sra R\la Crit_{r+kN_L-1}(f) \ra
\end{equation*} 
is obtained by counting \emph{pearl trajectories}, i.e. chains of gradient flow lines of $f$ and $J$-holomorphic disks in $M$ with boundary on $L$ with cumulative Maslov index $kN_L$. In the present text we will only be considering $R=\Z_2$ as ground coefficients. 
From P.Biran and O.Cornea \cite{BiranCorneaQuantumHomology}  the homology of this complex is generically well defined under the monotonicity assumption.   We will denote by $QH(L;\Lambda_L)$ the corresponding Lagrangian quantum homology. For more on this, we refer to  \cite{BiranCorneaQuantumHomology} and the references therein.

 Lagrangian Seidel elements are invertibles $S_L(\gamma)\in QH(L;\Lambda_L)$ where $\gamma\in\P_L\Ham(M,\om)$. Their definition involves counting pearl trajectories with pseudo-holomorphic sections  in the Hamiltonian fibration, $M \hra P_{\gamma}\stackrel{\pi}{\sra} D^2$,   associated with $\gamma$, which boundary lies on  the Lagrangian  $\pi^{-1}(\partial D^2)$ (see Section 3).  Here, we will discuss, to some extent,  what happens when $\gamma$ is a  path of Hamiltonian diffeomorphisms of $L$ admitting an $S^1$--Hamiltonian action extension:
 
\begin{defn}\label{definitionextension} We say that  $\gamma\in\P_L\Ham(M,\om)$ extends to an $S^1$--Hamiltonian action if  it is homotopic relative endpoints to a path  $\gamma':[0,1]\sra \Ham(M,\om)$ such that the concatenation of $\gamma'$ with itself,  $(\gamma')^2$,  defines an  $S^1$--Hamiltonian action. We denote by $\P$ the set of such paths.
\end{defn}

In particular, if $H_t:M\sra \R$ denotes the family of (normalized) Hamiltonian functions generating $\gamma'$,    the action of $(\gamma')^2$ is generated by a smooth time-independant (normalized) function $K:M\sra\R$.  Thus the normalized Hamiltonian generating $\gamma'$   is also time-independant and one has $K=2H$.   In the remaining of the article, unless otherwise mentioned,  we will   assume  that $\gamma$ is   $\gamma'$, i.e. already extends to an $S^1$--action.   Let $F_{max}$  denote the maximal fixed point set component of the $S^1$--action associated to $(\gamma')^2$.  Throughout the paper we will restrict our attention to the following case:

\emph{\begin{itemize}\label{hypotheseaction}
\item[(A1):]  $F_{max}$ is \emph{semifree}, i.e. the action is semifree in a neighbourhood of  $F_{max}$. 
\item[(A2):]  $L$ intersects $F_{max}$  cleanly, and the intersection  $F^L_{max}:=L\cap F_{max}$  is a Lagrangian submanifold of $F_{max}$. \end{itemize}}
 
\begin{rem}
\begin{itemize}
\item[1)] Note that (A2) implies $\dim(F^L_{max})=1/2\dim(F_{max})$.
\item[2)] If  the gradient flow of $K$ is contained in $L$, then  \emph{A2} holds. Indeed,  let $J$ be  an $S^1$--invariant almost complex structure on $M$ compatible with $\om$. At any $x\in F_{max}$ we have the splitting:
$$T_xM=T_xF_{max}\oplus N_x\cong \ker (1-d\gamma(x))\oplus N_x$$
where $N_x$ is the symplectic complement of $T_xF_{max}=\ker (1-d\gamma(x))$. Since $J$ is $S^1$--invariant, $J$ is also split and so is the Hermitian metric induced $g_J:=\om(\cdot,J\cdot)$. By the assumption, we have that $T_xL$ is compatible with that splitting meaning that:
$$T_xL=T_x F^L_{max}\oplus_{g_J} \{v\in T_xL| v\in N_x\}.$$
Since $L$ is Lagrangian, both terms in the above summand are maximally isotropic subspaces of $T_xF_{max}$ and $N_x$ respectively, and so the claim follows.  
\end{itemize}
\end{rem}
%\begin{lem} If  the gradient flow of $K$ is contained in $L$, then  \emph{A2} holds. 
%\end{lem}
%
%\begin{proof} Let $J$ be  an $S^1$--invariant almost complex structure on $M$ compatible with $\om$. At any $x\in F_{max}$ we have the splitting:
%$$T_xM=T_xF_{max}\oplus N_x\cong \ker (1-d\gamma(x))\oplus N_x$$
%where $N_x$ is the symplectic complement of $T_xF_{max}=\ker (1-d\gamma(x))$. Since $J$ is $S^1$--invariant, $J$ is also split and so is the Hermitian metric induced $g_J:=\om(\cdot,J\cdot)$. By the assumption, we have that $T_xL$ is compatible with that splitting meaning that:
%$$T_xL=T_x F^L_{max}\oplus_{g_J} \{v\in T_xL| v\in N_x\}.$$
%Since $L$ is Lagrangian, both terms in the above summand are maximally isotropic subspaces of $T_xF_{max}$ and $N_x$ respectively, and so the claim follows.  
%%By the equivariant symplectic neighbourhood theorem an $S^1$-invariant normal symplectic neighbourhood of $F_{max}$ is symplectomorphic to a Hermitian bundle $E$ over $F_{max}$ on which $S^1$ acts unitarily. This isomorphism is obtained by using the exponential map on  Let $E_x$ denote the fiber of $E$ over $x$.  Since $F_{max}$ is semi-free, $E$ splits as  a sum of complex line bundles on each of which $S^1$ acts by rotation of speed one.  Then $T_xL\cap E_x$   is also an isotropic subspace of $E_x$.  The claim follows. 
%\end{proof}

For a fixed point $x$, let $w(x)$ the sum of the weights at $x$. We recall that for an $S^1$--invariant $\om$--compatible almost complex structure on $M$, the action of $S^1$ on $T_xM\cong \C^n$ is conjugate to a product of circle actions $z\mapsto e^{2\pi k_j t}z$, $t\in S^1$. Then the sum of the weights $w(x)=\sum k_j$ defines a locally constant function and as such only depends on the connected component of the fixed point set in which $x$ lies.
 %This is a locally constant function and so only depends of the connected component of fixed points in which $x$ lies. 
We will denote by  $w_{max}$  the sum of the weights for points in the fixed point set $F_{max}$.  
%By the equivariant symplectic neighbourhood theorem an $S^1$-invariant normal symplectic neighbourhood of $F_{max}$ is symplectomorphic to a Hermitian bundle $E$ over $F_{max}$ on which $S^1$ acts up to conjugation, as a product of rotations of speed $k_j$.  Since $F_{max}$ is semi-free, $E$ splits as  a sum of complex line bundles on each of which $S^1$ acts by rotation of speed one.
 The main result of this paper  is the following:

\begin{theorem}\label{calculelementdeSeidel} Let $L$ be a monotone compact Lagrangian submanifold of $(M,\om)$.  Let  $\gamma\in\P$ satisfying the assumptions (A1) and (A2). The corresponding Lagrangian Seidel element is given by 
\begin{eqnarray*}S_L(\gamma)=[F^L_{max}]\otimes q^{-w_{max}}+\sum_{\{B\in\pi_2(M,L)|\mu_L(B)>0\}}a_B\otimes q^{-w_{max}-\mu_L(B)}
\end{eqnarray*}
where $\deg(a_B)=\dim(F^L_{max})+\mu_L(B)$. 
In particular, if $\codim(F_{max})=2$ then all lower order terms $a_B$ vanish. \end{theorem}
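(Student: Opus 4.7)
The plan is to adapt to the relative setting the McDuff--Tolman computation of absolute Seidel elements \cite{McDuffTolman}, exploiting the $S^1$-action to produce a distinguished family of sections parametrized by $F^L_{max}$. Writing $K=2H$ for the normalized Hamiltonian generating $(\gamma')^2$ and choosing an $S^1$-invariant $\om$-compatible almost complex structure $J$ on $M$, I would first build the Hamiltonian fibration $\pi\colon P_\gamma \sra D^2$ by the clutching construction associated with $\gamma$, together with the Lagrangian subbundle $L_\gamma \subset \pi^{-1}(\partial D^2)$ coming from the invariance of $L$ under $\gamma$; $J$ then extends to a fibered almost complex structure $\tilde J$ on $P_\gamma$ tamed by the coupling form. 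By definition $S_L(\gamma)$ is read off from pearl trajectories whose middle strand is a $\tilde J$-holomorphic section $u\colon (D^2,\partial D^2)\sra (P_\gamma, L_\gamma)$.

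Next I would identify the distinguished tautological family. Each fixed point $x\in F_{max}$ gives a section $\sigma_x$ which is constant in the two trivializations used for the clutching, and a weight computation as in \cite{McDuffTolman} shows that its vertical first Chern number equals $-w_{max}$. Assumption (A1) forces the normal weights at $F_{max}$ to be $-1$, so no section of strictly smaller Chern number exists; assumption (A2) says that $\sigma_x$ has boundary on $L_\gamma$ exactly when $x\in F^L_{max}$, and that $F^L_{max}$ is Lagrangian inside $F_{max}$. Hence $\{\sigma_x\}_{x\in F^L_{max}}$ is a moduli space of $\tilde J$-holomorphic sections of the correct dimension to saturate the Fredholm index, whose evaluation at a boundary marked point represents $[F^L_{max}]$. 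Combined with the grading conventions of the pearl complex this yields the leading term $[F^L_{max}]\otimes q^{-w_{max}}$, of total degree $\dim F^L_{max}-w_{max}=\dim L$.

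Any other $\tilde J$-holomorphic section $u$ with boundary on $L_\gamma$ differs from some $\sigma_x$ by a class $B\in\pi_2(M,L)$; minimality of $\sigma_x$ and monotonicity force $\mu_L(B)>0$, producing contributions $a_B\otimes q^{-w_{max}-\mu_L(B)}$ with $\deg a_B=\dim F^L_{max}+\mu_L(B)$ as claimed. In the case $\codim(F_{max})=2$, a tubular neighborhood of $F_{max}$ in $P_\gamma$ is a rank-one subbundle with the single normal weight $-1$; choosing an $S^1$-invariant $\tilde J$ integrable in this neighborhood, the only $\tilde J$-holomorphic sections of minimal vertical Chern number are precisely the $\sigma_x$, and any candidate contribution to $a_B$ would require a $J$-holomorphic disk bubble in $M$ meeting $F_{max}$, which is excluded by positivity of intersection together with the linearity of the action on the normal bundle. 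Hence all $a_B$ vanish. The main obstacle I anticipate is regularity: one must select $\tilde J$ so that the tautological family is cut out transversally and so that no lower-index section moduli or sphere bubbles interfere with the count, while preserving $S^1$-invariance near $F_{max}$ — the relative analog of the transversality arguments of \cite{McDuffTolman} combined with the pearl-trajectory transversality of \cite{BiranCorneaQuantumHomology}.
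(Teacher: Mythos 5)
Your overall strategy matches the paper's (adapt McDuff--Tolman, use the constant sections $\sigma_x$ at fixed points, transfer the weight data to $P_\gamma$ via the doubling of sections), but two steps that you gloss over or get wrong are precisely where the content of the proof lies. The first is regularity of the tautological family, which you only ``anticipate'' as an obstacle: this is where hypothesis (A1) is actually used. For a constant section at $x\in F^L_{max}$ the vertical bundle pulls back to $T_xM\cong\C^n$ with boundary condition the loop of Lagrangian subspaces $d\gamma(t)T_xL$; diagonalizing $d\gamma(t)$ splits the linearized problem into one-dimensional Riemann--Hilbert problems whose partial indices are the weights, namely $0$ along $T_xF_{max}$ and $-1$ along the normal directions by semifreeness, and Oh's theorem (partial indices $\geq -1$) then gives surjectivity of the linearization for the $S^1$-invariant $J$ itself. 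Without this you cannot assert that $\{\sigma_x\}_{x\in F^L_{max}}$ is transversally cut out and evaluates to $[F^L_{max}]$. Relatedly, ``minimality of $\sigma_x$'' must be proved, not invoked: the paper establishes the pointwise estimate $\Omega_c(w,J_Pw)\geq \frac{c-H_{max}}{\pi}\,dx\wedge dy(h,J_0h)$ for the explicit coupling form, hence $\Omega_c(\sigma)\geq c-H_{max}=\Omega_c(\sigma_{max})$ for every holomorphic section with boundary on $N$, with equality only for vertical-constant sections at points of $F_{max}\cap L$; this is what kills the classes $\sigma_{max}+B$ with $\om(B)\leq 0$ and gives compactness of the minimal moduli space. (Your sign for the vertical index is also off: the paper gets $\mu_v(\sigma_{max})=w_{max}$, and the exponent $-w_{max}$ comes from the convention $q^{-\mu_v}$ in the definition of $S_L$.)

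The second, more serious, problem is your argument for the vanishing of $a_B$ when $\codim(F_{max})=2$. A contribution to $a_B$ is a pearl trajectory whose section component represents $\sigma_{max}+B$; nothing forces such a configuration to contain a disk bubble meeting $F_{max}$, so positivity of intersection and integrability of $\tilde J$ near $F_{max}$ are beside the point, and the claim as stated does not follow. The paper's argument is a plain dimension count: a nonzero $a_B$ has $\deg a_B=\dim F^L_{max}+\mu_L(B)\leq n$, hence $\mu_L(B)\leq \codim_L(F^L_{max})=\tfrac12\codim_M(F_{max})=1$, which contradicts $\mu_L(B)\geq N_L\geq 2$ for any class represented by a $J$-holomorphic disk. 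You should replace your normal-bundle argument by this count.
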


As an example let us mention the case of  half of  a Hamiltonian loop fixing a given divisor $D$ (a facet of the moment polytope) in a Fano symplectic toric manifold. The endpoints of such path fix the real Lagrangian in this manifold (e.g consider a meridian $S^1$ in $S^2$ and the action of rotating around the poles) and  assumptions $(A1)$ and $(A2)$ are verified. Hence, if the real Lagrangian is monotone, one concludes that the corresponding Lagrangian Seidel element is given by $[D\cap L]\otimes q$.  It is worth noticing that lower order terms  may appear in situations that are reminiscent of those exposed in \cite[Theorem 1.10]{McDuffTolman} (see Example \ref{Theexample} in Section 5). In fact the more general results obtained in \cite{McDuffTolman} cannot go through with the techniques used here, as regularity of symmetric almost complex structures generally fails. \\

 Theorem \ref{calculelementdeSeidel} can be applied to deduce some results about uniruledness of Lagrangian submanifolds. As defined by O.Cornea and P.Biran in \cite{BiranCorneaUniruling},  a monotone Lagrangian manifold $L$ in $M$ is said to be \emph{1--uniruled}, or \emph{uniruled}, if there exists a second category Baire subset of families of almost complex structures of $M$ with the property that: for each such almost complex structure there is a non-constant pseudo-holomorphic disk in $M$ with boundary on $L$ passing through any generic point of $L$.

\begin{theorem}\label{Lagrangianuniruledness} Let $L\subset M$ be a closed monotone Lagrangian and suppose there is $\gamma\in \P$ such that the corresponding $S^1$--Hamiltonian action verifies hypothesis (A1) and (A2), then $L$ is uniruled. 
 \end{theorem}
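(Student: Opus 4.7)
Under the standing assumptions, $\dim F^L_{max}=\tfrac{1}{2}\dim F_{max}<n$ whenever the $S^1$--action is non-trivial (the trivial case being vacuous), and Theorem \ref{calculelementdeSeidel} identifies the highest-$q$-order term of $S_L(\gamma)$ as $[F^L_{max}]\otimes q^{-w_{max}}$, a class of homological degree strictly less than that of the unit $[L]$. Thus $S_L(\gamma)$ is a \emph{non-classical} invertible element of $QH(L;\Lambda_L)$: it cannot be of the form $c\cdot [L]\otimes q^k$ for any $c\in\Z_2$ and any $k\in\Z$.

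The strategy is to derive uniruledness by contradiction. Suppose $L$ is not uniruled: then one can choose an $\om$--compatible almost complex structure $J$ on $M$ and a generic point $p\in L$ such that no non-constant $J$--holomorphic disk with boundary on $L$ passes through $p$. Pick a Morse--Smale pair $(f,g_L)$ with $p$ as unique minimum, so that $[p]$ represents the non-zero generator of $H_0(L;\Z_2)$ in the pearl complex; the absence of disks through $p$ means this class persists as a non-zero element of $QH(L;\Lambda_L)$. I would then analyze the pearl computation of the quantum product $S_L(\gamma)\ast [p]$ order-by-order in $q$: its highest-order piece, $[F^L_{max}]\cap [p]\otimes q^{-w_{max}}$, vanishes for generic $p\notin F^L_{max}$ on dimensional grounds; and each lower-order coefficient $a_B\otimes q^{-w_{max}-\mu_L(B)}$, when paired classically with $[p]$, either vanishes (when $\dim F^L_{max}+\mu_L(B)<n$) or else arises from a top-degree component of $a_B$. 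In the latter case, a bubbling/breaking analysis of the moduli of $J$--holomorphic sections of $P_\gamma\to D^2$ defining $a_B$, with an added incidence constraint at $p$, must produce a non-constant $J$--holomorphic disk on $L$ through $p$, directly contradicting the choice of $(J,p)$.

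Consequently $S_L(\gamma)\ast [p]=0$ in $QH(L;\Lambda_L)$, and the invertibility of $S_L(\gamma)$ then forces $[p]=S_L(\gamma)^{-1}\ast 0=0$, contradicting the non-vanishing of $[p]$ established above. Hence $L$ must be uniruled. The main obstacle is the moduli-theoretic step that translates a hypothetical top-degree contribution $a_B\cap [p]\neq 0$ into the existence of a $J$--disk on $L$ through $p$: this is a Lagrangian analogue of the bubbling argument used by McDuff and others to deduce uniruledness of $M$ from the non-triviality of a Hamiltonian Seidel element, adapted here to the pearl-complex and sections-of-$P_\gamma$ framework developed in Section 3, with careful control of the boundary strata that can appear when one specializes the evaluation at the point $p$.
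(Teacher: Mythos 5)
Your overall strategy (multiply the Seidel element by the point class and use invertibility) is genuinely different from the paper's, but it breaks down at exactly the step you flag, and the problem is not merely technical. Write $c_B\in\Z_2$ for the coefficient of $[L]$ in $a_B$ (possible only when $\mu_L(B)=\codim F^L_{max}$, by the degree formula $\deg a_B=\dim F^L_{max}+\mu_L(B)$). Since $[L]$ is the unit, the \emph{classical} part of $a_B\star[p]$ already contains $c_B[p]$, and this term involves no pseudo-holomorphic disk whatsoever; under your standing assumption that no disk passes through $p$, all quantum corrections vanish and you are left with $S_L(\gamma)\star[p]=\bigl(\sum_B c_B\, q^{-w_{max}-\mu_L(B)}\bigr)[p]$. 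This is zero only if every $c_B$ vanishes, which is precisely what Theorem \ref{calculelementdeSeidel} does \emph{not} assert outside the case $\codim F_{max}=2$. So no contradiction is reached unless you first prove that $[L]$ does not occur in $S_L(\gamma)$, and your proposed remedy --- a bubbling analysis of the section moduli with an incidence condition at $p$ --- cannot supply this: the coefficients $c_B$ are counts of pearl--section configurations in $P_\gamma$ ending at the \emph{maximum} of $f_+$, not configurations through your chosen minimum $p$; and in any case a $J_P$--holomorphic section of $P_\gamma$ passing through a prescribed point of $N$ is a disk in $P_\gamma$, not a $J$--holomorphic disk in $M$ with boundary on $L$, so its existence does not witness uniruledness of $L$.

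The paper's route is built to sidestep exactly this issue. It splits into two cases. If $L$ is narrow, $[L]$ is a $d_Q$--boundary; since $[L]$ is represented by the unique maximum of a generic Morse function, whose stable manifold is a single (generic) point, this forces a non-constant disk through a generic point. If $L$ is not narrow, Lemma \ref{unirulingandinvertibles} shows that non-uniruledness makes $Q_-$ (the part generated in degrees $<n$) an ideal --- the key being that any quantum contribution to the $[L]$--coefficient of a product must pass through the maximum, again a single generic point --- so every invertible has a nonzero $[L]$--component; the Seidel element, with leading term $[F^L_{max}]\otimes q^{-w_{max}}$ and $\dim F^L_{max}<n$, then yields the contradiction. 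If you want to salvage your point-class argument, you would need an analogue of Lemma \ref{unirulingandinvertibles} showing that the $[L]$--part of $S_L(\gamma)$ vanishes when $L$ is not uniruled; at that point you have essentially reproduced the paper's argument with $[p]$ in place of $[L]$, but with a weaker geometric handle, since the stable manifold of the minimum is all of $L$ rather than a point.
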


Theorem \ref{calculelementdeSeidel}  also implies that  exact Lagrangian loops respecting the assumptions of the Theorem cannot be null-homotopic since the higher order term is not $[L]$. We further  show the following:

\begin{theorem}\label{lengthminimizing} Let $L\subset M$ be a closed monotone Lagrangian. Suppose there exists $\gamma\in \P_L\Ham(M,\om)$  such that $\gamma^2$ defines a semi-free $S^1$--Hamiltonian action on $M$. Then $\gamma$ minimizes the Hofer length in its homotopy class with fixed endpoints. 
\end{theorem}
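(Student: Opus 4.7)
The plan is to reduce to the loop case via a doubling trick and then invoke the theorem of McDuff--Slimowitz \cite{McDuffSlimowitz} that a semifree Hamiltonian $S^1$-loop is length-minimizing in its homotopy class. By the discussion following Definition \ref{definitionextension}, one may assume $\gamma$ is generated by a time-independent normalized Hamiltonian $H$, whose double $K=2H$ generates $\gamma^2$. Since an $S^1$-action returns to the identity at time one, $\gamma_1^2=\mathrm{id}$, so $\gamma^2$ is a based loop in $\Ham(M,\om)$.

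The key construction is as follows. Let $\gamma'\in\P_L\Ham(M,\om)$ be any competitor homotopic to $\gamma$ with fixed endpoints, and consider the concatenation $\gamma'*\gamma$. Since $\gamma'_1=\gamma_1=:\phi$ and $\phi^2=\mathrm{id}$, this is again a based loop at the identity. Concatenating the given path homotopy $\gamma\simeq\gamma'$ with the constant homotopy at $\gamma$ then produces a based loop homotopy from $\gamma^2=\gamma*\gamma$ to $\gamma'*\gamma$; in particular, these two loops represent the same class in $\pi_1(\Ham(M,\om),\mathrm{id})$.

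Next I would verify the additivity of Hofer length. A change of variables in $\int_0^1(\max H_t-\min H_t)\,dt$, together with reparametrization invariance, yields
\[\mathrm{length}(\gamma'*\gamma)=\mathrm{length}(\gamma')+\mathrm{length}(\gamma),\]
and in particular $\mathrm{length}(\gamma^2)=2\,\mathrm{length}(\gamma)$ since $H$ is autonomous. Applying McDuff--Slimowitz to the semifree $S^1$-loop $\gamma^2$ gives $\mathrm{length}(\gamma^2)\leq\mathrm{length}(\gamma'*\gamma)$, which simplifies to $\mathrm{length}(\gamma)\leq\mathrm{length}(\gamma')$, as desired.

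The argument is essentially a bookkeeping reduction whose single technical input is the McDuff--Slimowitz theorem; notably Theorem \ref{calculelementdeSeidel} and the Lagrangian Seidel element play no direct role, and the Lagrangian structure on $L$ enters only through the constraint $\gamma'_1=\phi$. The chief point to verify is simply the applicability of McDuff--Slimowitz to $\gamma^2$, which is guaranteed by the semifreeness hypothesis of the theorem; since McDuff--Slimowitz minimizes over \emph{all} homotopic loops (not merely those passing through $L$-preserving diffeomorphisms), the resulting bound is a fortiori valid for competitors in $\P_L\Ham(M,\om)$.
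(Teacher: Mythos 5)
There is a genuine gap: your argument controls the wrong functional. The Hofer length in Theorem \ref{lengthminimizing} is the \emph{Lagrangian} one from Section 2.3,
$$\ell(N)=\int_0^1\Bigl(\max_{x\in L_t}H_t(x)-\min_{x\in L_t}H_t(x)\Bigr)\,dt,$$
with the oscillation taken only over the moving Lagrangian $L_t=\gamma_t(L)$, and the quantity to be bounded from below is $\nu(N)=\inf_{N'}\ell(N')$, the infimum over all exact Lagrangian loops $N'$ Hamiltonian isotopic to $N$ (these arise from Lagrangian isotopies and need not even be cut out by ambient Hamiltonian paths homotopic to $\gamma$). McDuff--Slimowitz bounds from below the \emph{global} Hofer length $\int_0^1(\max_M H'_t-\min_M H'_t)\,dt$ of a competitor, and since $\ell(N')\le\int_0^1(\max_M H'_t-\min_M H'_t)\,dt$ always, the inequality points the wrong way: a lower bound on the global length of $\gamma'$ gives no lower bound on $\ell(N')$, whose defining oscillation can be made far smaller than the global one (e.g.\ by modifying the Hamiltonian away from $L'_t$). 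Even for $\gamma$ itself the identification of $\ell(N)$ with the global length requires $L$ to meet both $F_{max}$ and $F_{min}$, which is not part of your bookkeeping. You flagged yourself that the Lagrangian enters your argument only through the endpoint constraint; that is precisely the symptom of the problem, since the theorem is a statement of Lagrangian Hofer geometry.

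The paper's proof keeps the Lagrangian boundary condition throughout: it uses Akveld--Salamon's Theorem B, $\epsilon(N)\le\nu(N)$, and then bounds $\epsilon(N)$ from below by $H_{max}-H_{min}=\ell(N)$ via Proposition \ref{propositionAkveldSalamon}, whose hypothesis --- nonemptiness of the moduli spaces $\M(P_{\gamma},\sigma_{max};\tau,J)$ and $\M(P_{\gamma},\sigma_{min};\tau,-J)$ of holomorphic sections \emph{with boundary on $N$} for every connection form in $\mathcal{T}^{\pm}(\gamma)$ --- is supplied by (the proof of) Proposition \ref{propositionprincipale}. Semifreeness enters there, through the regularity of the constant sections over $F_{max}\cap L$ and $F_{min}\cap L$, not through McDuff--Slimowitz. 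Your doubling reduction would establish that $\gamma$ minimizes the absolute Hofer length among homotopic Hamiltonian paths, which is a different and, for the purposes of this theorem, insufficient statement.
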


The paper is organized as follows. In Section 2 we recall the definition of  Hamiltonian fibrations associated to loops or paths  of Hamitlonian diffeomorphisms. In Section 3 we introduce  Lagrangian Seidel elements. Section 4 is devoted to the proof of Theorems \ref{calculelementdeSeidel} and \ref{lengthminimizing}. In Section 5 we apply our results to Lagrangian uniruledness (we give the proof of Theorem \ref{Lagrangianuniruledness}) and  we show that the multiplicative relations for the Lagrangian Quantum Homology of real Lagrangians in Fano symplectic toric manifolds  are generated by Lagrangian Seidel elements (more precisely we prove Proposition \ref{Quantumpresentation}).\\

 \noindent\textbf{Aknowledgement.} I would like to thank  Fran\c{c}ois Charette, Octav Cornea, Tobias Ekholm and Yasha Savelyev for usefull discussions.  I would also like to thank Fran\c{c}ois Charette for his suggestions and comments on an earlier version of this note, that  helped improve the presentation of the paper; in particular for explaining to me how to simplify the assumptions in  Theorem \ref{Lagrangianuniruledness}.

\section{Hamiltonian fibrations} A Hamiltonian fibration $\pi:P\sra B$  with fiber $(M^{2n},\om)$, and compact symplectic base $(B,\om_B)$, is a symplectic fibrations which structure group reduces to $\Ham(M,\om)$. Such fibrations are naturally equipped with a family $\{\om_b\}_{b\in B}$ of symplectic forms in the fibers    $\pi^{-1}(b)$ induced by $\om$. It was shown by Guillemin, Lerman, and Sternberg  \cite{GLS}, and by McDuff and Salamon in full generality,   that Hamiltonian fibrations are symplectically trivial over the 1-skeleton of $B$ and that they admit an Erhesman connection on $TP$ which holonomy around any loop is Hamiltonian. This latter condition can be formally expressed as follows: there exists  a closed 2--form $\tau\in \Omega^2(P)$ extending $\omega$. The corresponding horizontal distribution, i.e. a direct complement  in $TP$ to the \emph{vertical subbundle} $Vert:=\ker d\pi$, is given by 
$$Hor_{\tau}(p):=\{w\in T_pP |\tau(w,v)=0\quad\forall v\in Vert_p=\ker d\pi(p)\}.$$
Different connection forms $\tau$ as above may determine the same horizontal distribution. However, a unique choice can be made by requiring that   $\pi_*\tau^{n+1}=0$, where $\pi_*$ denotes integration over the fibers. When this latter normalization condition is satisfied we say that $\tau$ is a \emph{coupling form}. 
Note that such Hamiltonian fibrations   admit symplectic structures:
 $$\Omega_{c}:= \tau+c\pi^* \om_B$$
where $c$ is a large enough strictly positive real number.

Recall that an $\om$--tame almost complex structure $J$ on a symplectic manifold $(M,\om)$ is a smooth endomorphism of $TM$ such that $$\forall p\in M,\quad (J(p))^2=-id_{T_pM}\quad \text{and}\quad \omega(\cdot,J \cdot)>0.$$
Let $\mathcal{J}(M,\omega)$ denote the set of $\om$--tame almost complex structures, which is contractible and non-empty \cite[Chapter 2]{MS2}. The symplectic manifold $(P,\Omega_{c})$ admits  $\Omega_{c}$--tame almost complex complex structures $J_P$ that are \emph{compatible with $\pi$ and $\tau$ or fibered} in the following sense: 
\begin{itemize}
  \item $d\pi \circ J_P=J_B\circ d\pi$, where  $J_B\in\mathcal{J}(B,\omega_B)$
  \item $J_b:=\left.J_P\right|_{\pi^{-1}(b)}\in\mathcal{J}(\pi^{-1}(b),\omega_{b})$ for all $b\in B$,
  \item $J_P$ preserves the horizontal distribution induced by $\tau$.
\end{itemize}
Let $\mathcal{J}(P,\Omega_{c},\tau,\pi)$ denote the set of such almost complex structures. In fact, for fixed $\tau$,   any family $J=\{J_b\}_{b\in B}$  of $\omega_{b}$-tame almost complex structures and any given $J_B$ give rise to a unique fibered $J_P\in \mathcal{J}(P,\Omega_{c},\tau,\pi)$ for some $c\in\R$.

Hamiltonian fibrations as above are given two canonical cohomology classes. The first one is the \emph{vertical first Chern class} induced by any family of almost complex structures $\{J_b\}_{b\in B}$   and defined by 
$$c_v:=c_1(Vert)\in H^2(P,\Z).$$
The second one  is the   deRham cohomology class of the coupling form $[\tau]\in H^2(P,\R)$: this is the unique class such that 
$$\iota^*[\tau]=[\om]\quad\text{and}\quad [\tau]^{n+1}=0.$$
 In what follows we shall only consider Hamiltonian fibrations over $D^2$ and $S^2$.

\subsection{Hamiltonian fibrations associated to a loop of Hamiltonian diffeomorphisms} Let $\gamma_t\in \L \Ham(M,\om)$. Such a loop defines a Hamiltonian fibration over $S^2$ via the \emph{clutching construction}. Namely, let $D^+$ and $D^-$  denote the unit discs in $\C$, but  with opposite orientations. Then set 
$$P_{\gamma}:=\left.D^{+}\times M\sqcup D^{-}\times M\right/ (e^{i2\pi t},x)\sim (e^{i2\pi t},\gamma_t(x)),\,\,t\in [0,1].$$
This is obviously a Hamiltonian fibration over $(S^2,\om_{FS})$ where  $\om_{FS}$ denotes the Fubini-Study form on $S^2$ with total area two. Its isomorphism class only depends  on the homotopy class of the 
loop $\{\gamma_t\}$. In fact  any Hamiltonian fibration over $S^2$ can be obtained in this way \cite{LMP}.  If $\tau_{\gamma}$  denotes the corresponding  coupling form, then  for big enough positive constant $c$
    $$\Omega_{c}:=\tau_{\gamma}+c\pi^* \om_{FS},$$
is symplectic.  

When  $\{\gamma_t\}$ is given by an $S^1$--action on $F$, $P_{\gamma}$ can be described in the following way. Let $p:S^3\sra S^2$ denote the Hopf fibration. The product $S^3\times M$ admits the free $S^1$ action:
$$e^{it}.((z_0,z_1),x)\mapsto((e^{-it}z_0,e^{-it}z_1), \gamma_t(x)),$$
and the quotient $S^3\times_{S^1} M$ can be identified with $P_{\gamma}$. In that setting, the coupling form is obtained by considering a connection 1--form on $S^3$. Namely, let $\alpha\in \Omega^1(S^3)$ be the standard contact structure, so that $d\alpha=p^*\om_{FS}$ where $\om_{FS}$ is the Fubini-Study form on $S^2$ normalized to have area 1. Then $\om-d(K\alpha)\in\Omega^2(S^3\times M)$ defines a closed basic form hence defines aclosed 2-form on the quotient: 
$$\tau_{\gamma}=pr_*(\om-d(K\alpha))\in \Omega^2(S^3\times_{S^1} M)$$
where $pr:S^3\times M\sra S^3\times_{S^1} F$ denotes the projection and $pr_*$ denotes integration over the fibers of $pr$. This form clearly extends $\om$ on the fiber $F$, and since $K$ is normalized the integral $\pi_*\tau_{\gamma}^{n+1}$ vanishes. Thus, $\tau_{\gamma}$ is a coupling form.

In this particular framework, each fixed point of the action yields a section of $P_{\gamma}$. Namely, for $x\in Fix$, the corresponding section is $$\sigma_x:=S^3\times_{S^1}\{x\}.$$
The following will be useful later on:
\begin{lem}\cite[Lemma 2.2]{McDuffTolman}\label{McDuffTolman1}  If $x$ is a fixed point of the Hamiltonian circle action of $\gamma$, then 
$$c_v(\sigma_x)=w(x)\quad and \quad \tau_{\gamma}(\sigma_x)=-K(x)$$
Moreover, if $B$ is the class of a sphere    formed by  the $\gamma$-orbit of an arc between $x$ and $y$, then $B=\sigma_x-\sigma_y$. 
\end{lem}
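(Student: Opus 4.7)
The plan is to treat the three claims separately, in each case exploiting the description $P_\gamma = S^3\times_{S^1}M$ together with the explicit coupling form $\tau_\gamma$ descending from $\tilde\tau := \om - d(K\alpha)$ on $S^3\times M$.

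For $c_v(\sigma_x) = w(x)$, the plan is to restrict the vertical bundle to the section. A choice of $S^1$-invariant $\om$-compatible almost complex structure identifies $T_xM$ with a complex $S^1$-module that splits as $\bigoplus_{j=1}^n \C_{k_j}$, where the $k_j$ are the integer weights at $x$. Hence $Vert|_{\sigma_x} \cong S^3\times_{S^1}T_xM$ splits as $\bigoplus_j L_{k_j}$ with $L_k := S^3\times_{S^1}\C_k$ the standard weight-$k$ line bundle over $\sigma_x\cong S^3/S^1=S^2$. Each $L_k$ has $c_1 = k$ (the classical identification of $L_{-1}$ with the tautological bundle over $\C P^1$), so summing yields $c_v(\sigma_x) = \sum_j k_j = w(x)$.

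For $\tau_\gamma(\sigma_x) = -K(x)$, I would pull the computation back to $S^3\times M$. Pick a local section $s: U \to S^3$ of the Hopf fibration $p: S^3\to S^2$ defined on the complement of one point, so that $\sigma_x|_U$ is parameterized by $b\mapsto [s(b),x]$. Then $\tau_\gamma$ pulls back under $(s,x): U\to S^3\times M$ to the restriction of $\tilde\tau$. The $\om$-summand vanishes because it is evaluated along the one-point factor $\{x\}$. Since $K$ is constant along $\{x\}$, the remaining summand becomes $-K(x)\,s^*d\alpha = -K(x)\,s^*p^*\om_{FS} = -K(x)\,\om_{FS}$, and integration over $S^2$ (with the normalization $\int_{S^2}\om_{FS}=1$ used in this construction) gives $\tau_\gamma(\sigma_x) = -K(x)$.

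For $B = \sigma_x - \sigma_y$ in $H_2(P_\gamma)$, the plan is to build an explicit cobounding 3-chain. Fix an arc $\delta:[0,1]\to M$ from $x$ to $y$, and in the clutching model $P_\gamma = D^+\times M\cup D^-\times M$ form the two 3-chains $V^\pm := D^\pm\times\delta([0,1])$. Their end faces $D^\pm\times\{x\}$ and $D^\pm\times\{y\}$ glue under the clutching into $\sigma_x$ and $\sigma_y$, since both endpoints are fixed by $\gamma$. Their lateral faces over $\partial D^\pm$ do not match: on the $D^+$ side one sees $\delta(r)$ in the fiber, while the clutching identifies this with $\gamma_t(\delta(r))$ on the $D^-$ side, disagreeing with the $\delta(r)$ contributed by $V^-$. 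The resulting discrepancy is an annulus in a single fiber whose two boundary circles collapse at the fixed points $x$ and $y$, and it therefore caps off to the sphere $B$ formed by the $\gamma$-orbit of $\delta$. Assembling $V^+\cup V^-$ with this correction gives a 3-chain $W\subset P_\gamma$ with $\partial W = \sigma_x - \sigma_y - B$, whence the asserted relation.

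The main obstacle is part (3): the clutching bookkeeping and the assignment of orientations in the 3-chain construction are easy to confuse. An alternative, more invariant route uses the long exact sequence of the fibration $M\hra P_\gamma\to S^2$ to place $\sigma_x-\sigma_y$ in the image of $H_2(M)$ and to identify this image via the connecting homomorphism with the class of the $S^1$-orbit of $\delta$, but the chain-level realization above is the most transparent and the easiest for verifying signs.
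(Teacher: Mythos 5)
The paper does not prove this lemma at all: it is quoted verbatim from McDuff--Tolman, so there is no in-text argument to compare against. Your proposal correctly supplies the standard proof, and it is essentially the one in the cited reference: the weight-space splitting $Vert|_{\sigma_x}\cong\bigoplus_j S^3\times_{S^1}\C_{k_j}$ for the Chern number, the pullback of $\om-d(K\alpha)$ along a local section of the Hopf map (where only $-K(x)\,d\alpha=-K(x)\,p^*\om_{FS}$ survives, and the area-one normalization of $\om_{FS}$ is indeed the one used in this construction) for the coupling-form evaluation, and a cobounding $3$-chain for $B=\sigma_x-\sigma_y$. The only point where you are slightly loose is in the third part: the lateral discrepancy between $D^+\times\delta$ and $D^-\times\delta$ is a cycle spread over the whole boundary circle $\partial D\times M$, namely $(t,r)\mapsto(e^{2\pi it},\gamma_t(\delta(r)))$ minus $(t,r)\mapsto(e^{2\pi it},\delta(r))$, not literally an annulus in a single fiber; one further homotopy (e.g.\ $(s,t,r)\mapsto(t,\gamma_{st}(\delta(r)))$, whose only nondegenerate extra face at $t=1$ sweeps out the orbit sphere) is needed to identify it with the fiber class $B$. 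Up to that bookkeeping, and the sign conventions for the $S^1$-action on $S^3$ that are fixed by the cited source, the argument is complete.
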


Consider now an $S^1$--invariant $\om$--tame almost complex structure on $M$. Note that the standard complex structure $J_0$ in $\C^2$ is  $S^1$--invariant. Its restriction to $S^3$ preserves the contact structure $\ker \alpha$. Let $R$ denote the Reeb vector field associated to $\alpha$, and $X_K$ denote the Hamiltonian vector field of the $S^1$--action on $M$. 
 Then  any vector field $v=[v_1,v_2]\in T(S^3\times_{S^1}M)$ admits a unique representative in $T(S^3\times M)$  lying in $\ker \alpha\oplus TM$ given by 
$$(v_1-\alpha(v_1)R, v_2+\alpha(v_1) X_K).$$ 
It follows that $J_0\times J$ descends to a well-defined almost complex structure $\ov{J}$ on $P_{\gamma}$, which is  obviously fibered and tames $\Omega_{c}$ for $c>\max K$. Note that $\sigma_x$ is then $\ov{J}$--holomorphic.

\subsection{Hamiltonian fibrations associated to $\gamma\in \P_L\Ham(M,\om)$}
To any path $\gamma \in \P_L\Ham(M,\om)$ one  associates a Hamiltonian fibration over the 2-disc  as follows. Let $\mathbb{H}$ denote the upper half plane in $\C$, and set 
$$D^+_{+}:=\{z=x+iy\in \mathbb{H}| |z|\leq 1\}$$
and $D^+_{-}$ is the same half disc but with opposite orientation. In particular, the compactified upper-half plane   $\ov{\mathbb{H}}=D^+_{+}\cup_{\varphi} D^+_{-}$  coincides with the disc of radius one $D^2$ (here,
 $\varphi: D^{+}_+\sra D^+_-$ is given by $z\sra \bar{z}^{-1}$). 
%Let also $\R D$ denote the real segment in $D_{\pm}$, which can be parametrized by $\{2t-1\}$ with $t\in[0,1]$. 

The Hamiltonian fibration associated to the path is then given by the \emph{half clutching construction}:
$$P_{\gamma}:=\left.D^+_+\times M\sqcup D^+_-\times M\right/ (e^{i\pi t},x)\sim (e^{i\pi t},\gamma_t(x)).$$
Again, its isomorphism class only depends on the homotopy class with fixed endpoints of $\gamma_t$. This manifold carries symplectic structures such that the subbundle defined by collecting all the copies of $L$ along the boundary of $\ov{\mathbb{H}}$ is a  Lagrangian submanifold $N$ fibering over $S^1$. If $\gamma_t$ denote a Hamiltonian isotopy representing $\gamma\in \Ham_L(M,\om)$, then 
$$N\cong \bigcup_{t\in[0,1]}\{e^{2\pi i t}\}\times \gamma_t(L)$$
This Lagrangian actually embeds in $P_{\gamma}\cong D^2\times M$ and is called exact Lagrangian loop (see \cite{AkveldSalamon}, \cite{HuLalondeLeclercq}). Considering the specific case where $\gamma\in \P$ one easily sees that $N$ is a Lagrangian. 

A connection form on $P_{\gamma}$ is  given by 
\begin{equation}\label{explicitcouplingform}\tau_{\gamma}=\om-dH\wedge dx-dH\wedge dy-\frac{H}{\pi} dx\wedge dy,
\end{equation}
and symplectic structures are explicitly given by
	$$\Omega_{c}= \tau_{\gamma}+ \frac{c}{\pi} dx\wedge dy,\quad \text{for $c>0$ big  enough} .$$
 One verifies that the parallel transport of $\tau_{\gamma}$ preserves the fiber bundle $N$. 
 Letting   $L_s$ be the copy of $L$ lying in the fiber over $s\in \partial D^2$, this is the same as saying that the vector field along $N$ 
$$(s_0,\left.\frac{d}{ds}\right|_{s=0}\gamma_s(p)),\quad p\in L_{s_0}$$
is horizontal with respect to $\tau_{\gamma}$.   Hence,  $N$ is Lagrangian submanifold of $P_{\gamma}$ for the symplectic forms $\Omega_{\kappa}$. 
Note also that $\tau$ vanishes on $N$.  We will denote by $\mathcal{T}(\gamma)$ the set of connection 2--forms on $P_{\gamma}$ which parallel transport along the boundary  preserves $N$. Equivalently these are the connection 2--forms that vanish identically on $N$ (see \cite{AkveldSalamon}, Lemma 3.1.).   The set of relative cohomology classes $[\tau]\in H^2(D^2\times M, N;\Z)$ associated to elements $\tau\in \mathcal{T}(\gamma)$  is a 1--dimensional affine space: for any  $\tau_0,\tau_1\in \mathcal{T}(\gamma)$
 \begin{equation*} [\tau_1]-[\tau_0]=c(\tau_1,\tau_0) [\frac{1}{\pi}dx\wedge dy],\quad s(\tau_1,\tau_0)\in\R.
 \end{equation*}
Hence, for $c$ big enough $\tau=\tau_0+\frac{c}{\pi} dx\wedge dy$ is symplectic, and if small enough $-\tau$ is symplectic. Set 
$\mathcal{T}^{\pm}(\gamma):=\{\tau\in \mathcal{T}(\gamma) | \pm\tau^{n+1}>0\}$. 
Then, any value of $c$ for which $\tau$ is non-symplectic lies between the following two real numbers
\begin{equation*}\epsilon^+(\tau_0, N):=\inf \{c(\tau,\tau_0)|\tau\in\mathcal{T}^{+}(\gamma)\}
\end{equation*}
and 
\begin{equation*}\epsilon^-(\tau_0, N):=\inf \{c(\tau,\tau_0)|\tau\in\mathcal{T}^-(\gamma)\}.
\end{equation*}
The width of the corresponding non-symplectic interval  $\epsilon (N)$ does not depend on the reference point  and is given by\begin{equation*}\epsilon (N)=\epsilon^+(\tau_0, N)-\epsilon^-(\tau_0, N) .
\end{equation*}

\subsection{Hofer length of exact Lagrangian loops} Let $\gamma_t$ denote a Hamiltonian isotopy representing $\gamma\in  \P_L\Ham(M,\om)$. As seen precedently this defines an exact Lagrangian loop $N\subset D^2\times M$. Assume $\gamma_t$ is generated by the family $H_t$ of  Hamiltonians. In the case where $M$ is not compact these must be compactly supported. When $M$ is closed then we assume that $H_t$ is normalized for all $t$.  The Hofer length of $N$ is defined to be
\begin{eqnarray*} \ell(N)=
\int_0^1 (\max_{x\in L_t} H_t(x)-\min_{x\in L_t} H_t(x)) dt,
\end{eqnarray*}
where $L_t=\gamma_t(L)$. Subsequently we will consider minimizing the Hofer length within the isotopy class of $\gamma$ with fixed endpoints. This is the same as minimizing $\ell(N)$ within its isotopy class of exact Lagrangian loops.    In other words we will examine the quantity:
\begin{equation*} \nu(N; M,\om ):=\inf_{N'} \ell(N')
\end{equation*}
where the infimum is taken over all exact Lagrangian loops that are Hamiltonian isotopic to $N$. The following theorem is due to M. Akveld and D. Salamon:

\begin{theorem}[M. Akveld and D. Salamon, \cite{AkveldSalamon}, Theorem B] \label{theoremeBAkveldSalamon} For every exact Lagrangian loop $N$
$$\epsilon(N)\leq \nu(N).$$ 
\end{theorem}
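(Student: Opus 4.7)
The plan is to establish $\epsilon(N)\leq \ell(N')$ for every exact Lagrangian loop $N'$ Hamiltonian isotopic to $N$. Since $\epsilon$ is an invariant of the Hamiltonian isotopy class of exact Lagrangian loops, one has $\epsilon(N)=\epsilon(N')$, and taking the infimum over all such $N'$ gives $\epsilon(N)\leq \nu(N)$.

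Fix a representative path $\gamma_t$ for $N'$ with generating family $\{H_t\}$. Substituting $H_t$ into the explicit formula \eqref{explicitcouplingform} yields a reference form $\tau_\gamma\in\mathcal{T}(\gamma)$. The crucial structural observation is that shifting $H_t$ by any smooth family $c(t)$ of constants on $M$ leaves the generated Hamiltonian isotopy unchanged, while modifying \eqref{explicitcouplingform} by an explicit multiple of $\tfrac{1}{\pi}\,dx\wedge dy$; the shifted form remains in $\mathcal{T}(\gamma)$ provided the new family vanishes on $L_t$ in the correct one-sided sense. I would use the two canonical shifts
$$H^+_t := H_t - \max_{L_t} H_t, \qquad H^-_t := H_t - \min_{L_t} H_t,$$
which after substitution into \eqref{explicitcouplingform} produce connection 2--forms $\tau^+,\tau^-\in \mathcal{T}(\gamma)$ whose affine coordinates satisfy
$$c(\tau^+,\tau_\gamma) - c(\tau^-,\tau_\gamma) = \int_0^1 \bigl(\max_{L_t} H_t - \min_{L_t} H_t\bigr)\,dt = \ell(N').$$

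The core technical step is to certify that $\tau^+$ lies in $\mathcal{T}^+(\gamma)$ and $\tau^-$ in $\mathcal{T}^-(\gamma)$. Expanding $(\tau^\pm)^{n+1}$ pointwise, the dominant contribution is $(n+1)\,\omega^n\wedge \bigl(-\tfrac{H^\pm_t}{\pi}\,dx\wedge dy\bigr)$ up to horizontal cross terms coming from $dH^\pm\wedge dH^\pm$. Near $N$ the prescribed sign of $H^\pm_t|_{L_t}$ forces the desired sign on the top power, and one extends the control globally either by a compactly supported perturbation of $H_t$ that puts the fiberwise extrema on $L_t$ without altering the exact Lagrangian loop $N'$, or via a continuity argument in the affine family. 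With the sign conditions in place, the definitions of $\epsilon^\pm$ immediately yield $\epsilon(N)=\epsilon(N')\leq \ell(N')$, and taking the infimum over $N'$ gives the theorem.

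The hard part will be the global sign analysis of $(\tau^\pm)^{n+1}$: the canonical shifts $H^\pm_t$ are guaranteed to have the desired sign only along $L_t$, not on all of $M$, so either a reduction to a tubular neighborhood of $N$ with Moser-type extension, or an indirect relative-cohomology argument bounding $\epsilon^\pm$ without requiring global definiteness, must be invoked to upgrade the local sign to the global condition defining $\mathcal{T}^\pm(\gamma)$. This is where the Lagrangian character of $N$ and the use of $\max_{L_t}/\min_{L_t}$ rather than $\max_M/\min_M$ play the decisive role, distinguishing $\ell(N)$ from the ordinary Hofer length.
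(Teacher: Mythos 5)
This theorem is not proved in the paper at all: it is imported verbatim as Theorem B of Akveld--Salamon \cite{AkveldSalamon}, so there is no internal argument to compare your proposal against. Judged on its own terms, your outline does reconstruct the right strategy (and, as far as I can tell, essentially the strategy of the cited source): produce, for each representative $N'$ with generating Hamiltonian $H_t$, connection forms $\tau^{\pm}\in\mathcal{T}^{\pm}(\gamma)$ whose affine coordinates differ by (roughly) $\ell(N')$, and then invoke the definition of $\epsilon^{\pm}$. Two small points of hygiene: with the paper's stated definitions both $\epsilon^+$ and $\epsilon^-$ are infima, under which your last step does not close; you need $\epsilon^-(\tau_0,N)$ to be the \emph{supremum} of $c(\tau,\tau_0)$ over $\mathcal{T}^-(\gamma)$ (this is evidently a typo in the paper, since otherwise $\epsilon^-=-\infty$). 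Also, since $\mathcal{T}^{\pm}$ are defined by strict inequalities and $H_t-\max_{L_t}H_t$ vanishes at its maximum on $L_t$, you should shift by $\max_{L_t}H_t+\delta$ and $\min_{L_t}H_t-\delta$ and let $\delta\to 0$; you only ever get $\epsilon(N)\le\ell(N')+\delta$ directly.

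The genuine gap is the one you flag yourself: subtracting $\max_{L_t}H_t$ controls the sign of $H^{\pm}_t$ only along $L_t$, whereas membership in $\mathcal{T}^{\pm}(\gamma)$ requires a global sign for $(\tau^{\pm})^{n+1}$, and as written your proof does not establish it. The resolution is not a Moser argument or a continuity argument in the affine family, but the following elementary observation, which you half-state and should make precise: the exact Lagrangian loop $N'$, and hence the set $\mathcal{T}(\gamma)$ (connection forms are characterized by vanishing on $N'$) and the invariants $\epsilon^{\pm}$, depend only on the family of submanifolds $L_t=\gamma_t(L)$ and not on the ambient isotopy. Since the trajectory of a point of $L$ only sees $H_t$ in an arbitrarily small neighbourhood of $L_t$, one may cut $H_t$ off outside such a neighbourhood so that its \emph{global} extrema on $M$ lie within $\delta$ of $\max_{L_t}H_t$ and $\min_{L_t}H_t$, without changing $N'$. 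After this replacement the canonical shifts are globally one-signed and your computation of $(\tau^{\pm})^{n+1}$ (in which, note, the cross terms $dH\wedge dx\wedge dH\wedge dy$ do cancel for the explicit form \eqref{explicitcouplingform}) goes through. Without this localization step the argument is incomplete; with it, it is a correct proof of the inequality, modulo the careful bookkeeping of orientations and of the identification $P_{\gamma}\cong D^2\times M$.
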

%As precedently,  the isomorphism class of $P_{\gamma}$ does not depend on the parametrization of $\gamma$.  

\subsection{The doubling procedure}
Let   $\gamma^2$ denote the   loop of Hamiltonian diffeomorphisms  associated to the path $\gamma\in\P$. First note that we have an obvious embedding
$$\iota_2:P_{\gamma}\hra P_{\gamma^2}.$$  
Taking the pull-back of $pr_*(\om-d(K\alpha))$ under $\iota_2$ actually yields $\tau_{\gamma}$ in \eqref{explicitcouplingform}.

In fact,  $P_{\gamma^2}$ is made of two copies of $P_{\gamma}$ glued together along their boundary. More precisely,
 $$ P_{\gamma^2}=P_{\gamma}\cup_{\varphi}P_{\gamma}$$
where
$$\varphi:\partial P_{\gamma}\sra \partial P_{\gamma},\quad (s, x)\to (-s, \gamma_1(x)).$$

This happens to be   useful subsequently  to induce information on $P_{\gamma}$ from $P_{\gamma^2}$. In particular, this will be handy when dealing with holomorphic sections. Concerning sections let us note that not only  any section in $P_{\gamma^2}$ gives rise to a section in $P_{\gamma}$   (which may not  have boundary on the Lagrangian), but also, any section $$\sigma :(D^2,S^1)\sra (P_{\gamma},N),\quad z\to (z,u(z))$$ can be doubled to give a section in $P_{\gamma^2}$ with the equator being constrained to the Lagrangian $\iota_2(N)$. The new section is given by
$$\sigma_{db}:S^2\sra P_{\gamma^2},\quad z\to \begin{cases} (z,u(z)) & \text{if $z\in \ov{\mathbb{H}}$}\\
(z,\gamma_1 (u(e^{-i\pi}z)))& \text{if $z\in e^{i\pi}\cdot\ov{\mathbb{H}}$}.
\end{cases}$$ 
%$$\ov{P}_{\gamma}:=\left.D^-_+\times M\sqcup D^-_-\times M\right/ ((e^{i\pi (t+1)},x)\sim (e^{i\pi (t+1)},\gamma_t(x)).$$
Note that this is  well-defined and continuous.

\section{Lagrangian Seidel elements} 
Here we recall the definition of Seidel elements in both the absolute and relative cases. 

\subsection{Section classes and vertical Maslov index}
Consider the pair $(P_{\gamma},N)$  as above. We say that $B\in \pi_2(P_{\gamma},N)$ is a \emph{section class} if and only if $\pi_*(B)=[D^2,S^1]$ is the positive generator. We say that it is a \emph{fiber class} if $B$ lies in the image of the inclusion map $$\pi_2(M,L)\sra \pi_2(P_{\gamma},N)$$ thus implying $\pi_*B=0$. As shown in \cite{HuLalondeLeclercq}, the following sequence is exact in the middle:
\begin{equation}\label{exactsequencedisc} \pi_2(M,L)\sra \pi_2(P_{\gamma},N)\sra \pi_2(D^2,S^1)
\end{equation}

\begin{defn}(\cite{HuLalondeLeclercq}) Let $u:D^2\sra P_{\gamma}$ be  a smooth map representing $B\in\pi_2(P_{\gamma},N)$.The \emph{vertical Maslov Index} of $B$ is the Maslov Index of the pair $(u^*(T^vP_{\gamma}), u^*T^vN)$, where $T^vN$ denotes the vertical tangent bundle of the bundle $N$. We will denote this number by $\mu_{\gamma}^v(B)$ or $\mu_v$ for simplicity.
\end{defn}

 It is a well-defined $\Z$--valued  morphism of $\pi_2(P_{\gamma},N)$ which further verifies that: 
 $$\mu_N(B)=\mu_{\gamma}^v(B)+2\quad \text{and}\quad \mu_N(B-B')=\mu_L(B-B')$$ for two section classes $B$ and $B'$.  We make the following identification on  $\H_{rel}\subset H_2(P_{\gamma}, N)$ the set of section classes: 
 $$B\sim B'\Leftrightarrow \tau_{\gamma}(B-B')=0=\mu_{\gamma}^v(B-B').$$ 
This is obviously an equivalence relation. We set  $\widetilde{\H_{rel}}:= \left.\H_{rel}\right/\sim$ and we will denote by $[B]$ the equivalence class of $B\in \H_{rel}$. The following follows directly from the definitions of the vertical Maslov class and of the doubling of a section:

\begin{lem}\label{relationMaslovclassandChernclass} Let   $\gamma^2$ denote the   loop of Hamiltonian diffeomorphisms  associated to the path $\gamma\in\P$.  If $\sigma$ represents the class $[\sigma]\in H_2(P_{\gamma}, N)$ then $\sigma_{db}$ represents the class $[\sigma^2]:=[\sigma\#\sigma]\in H_2(P_{\gamma^2})$ and one has 
$$c_v([\sigma_{db}])=\mu_{\gamma}^v([\sigma])\quad \text{and}\quad \tau_{\gamma^2}([\sigma_{db}])=2\tau_{\gamma}([\sigma])$$
\end{lem}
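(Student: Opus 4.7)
The plan is to prove the two identities separately, using the explicit piecewise formula for $\sigma_{db}$ displayed just above the lemma, the compatibility $\iota_2^{*}\tau_{\gamma^2}=\tau_{\gamma}$ noted in the doubling procedure subsection, and the classical doubling theorem for Riemann--Hilbert bundle pairs. The homological identity $[\sigma_{db}]=[\sigma\#\sigma]$ is immediate from the definition of $\sigma_{db}$ together with the decomposition $P_{\gamma^2}=P_{\gamma}\cup_{\varphi}P_{\gamma}$.

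For the coupling form identity, I would split the sphere as $S^2=\ov{\mathbb{H}}\cup e^{i\pi}\ov{\mathbb{H}}$ and write
\[
\tau_{\gamma^2}([\sigma_{db}])=\int_{\ov{\mathbb{H}}}\sigma_{db}^{*}\tau_{\gamma^2}+\int_{e^{i\pi}\ov{\mathbb{H}}}\sigma_{db}^{*}\tau_{\gamma^2}.
\]
On the upper hemisphere $\sigma_{db}$ factors as $\iota_2\circ\sigma$, and the relation $\iota_2^{*}\tau_{\gamma^2}=\tau_{\gamma}$ immediately gives the contribution $\tau_{\gamma}([\sigma])$. On the lower hemisphere, $\sigma_{db}$ lands in the second copy of $P_{\gamma}$ inside $P_{\gamma^2}=P_{\gamma}\cup_{\varphi}P_{\gamma}$; after the base change of variable $z\mapsto e^{-i\pi}z$ and the fibre identification by $\gamma_1\in\Ham(M,\om)$---neither of which affects the integral since $\tau_{\gamma^2}$ restricts to $\tau_{\gamma}$ on each copy and $\gamma_1$ preserves $\om$---the integral again reduces to $\tau_{\gamma}([\sigma])$. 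Summing yields $2\tau_{\gamma}([\sigma])$.

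For the vertical Chern class, I would invoke the standard doubling theorem for bundle pairs: if $E\to D^2$ is a complex vector bundle with totally real rank-$n$ boundary subbundle $F\subset E|_{S^1}$, then the complex bundle $\wh E\to S^2$ obtained by gluing $E$ to its complex conjugate $\ov E$ along $S^1$ via the identification $E|_{S^1}\cong F\otimes_{\R}\C$ satisfies $c_1(\wh E)=\mu(E,F)$. Applying this with $E=\sigma^{*}T^vP_{\gamma}$ and $F=\sigma|_{S^1}^{*}T^vN$ produces a bundle whose first Chern number equals $\mu_{\gamma}^v([\sigma])$. It then suffices to identify $\sigma_{db}^{*}T^vP_{\gamma^2}$ with $\wh E$: the upper-hemisphere piece of $\sigma_{db}^{*}T^vP_{\gamma^2}$ coincides tautologically with $E$ via $\iota_2$, and the lower-hemisphere piece is glued in by the clutching $\varphi:(s,x)\mapsto(-s,\gamma_1(x))$, which on the vertical bundle implements---up to homotopy through complex isomorphisms---the standard conjugate identification, because $\gamma_1\in\Ham(M,\om)$ and $T^vN$ is a totally real subbundle of $T^vP_{\gamma}|_{\partial}$.

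The main obstacle is this last bundle-theoretic identification along the equator of $S^2$: one must check that the clutching $\varphi$ together with the base reflection $z\mapsto e^{-i\pi}z$ built into $\sigma_{db}$ agrees, up to homotopy through complex isomorphisms, with the standard Riemann--Hilbert doubling map on $F\otimes_{\R}\C$. This is a diagram chase with no analytic content---only the homotopy class of the equatorial gluing matters for computing $c_1$---but carefully bookkeeping the complex-conjugation conventions is the subtle point.
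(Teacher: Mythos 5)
Your argument is correct, but for the Chern--Maslov identity it takes a genuinely different route from the paper. The paper's proof is a two-line computation: it uses that on the closed class $[\sigma_{db}]\in H_2(P_{\gamma^2})$ the vertical Maslov index is twice the vertical Chern number, together with additivity of $\mu_v$ under the decomposition of the double into two copies of $\sigma$, to get $2c_v([\sigma_{db}])=\mu_v([\sigma_{db}])=\mu_v([\sigma])+\mu_v([\sigma])=2\mu_v([\sigma])$ and then divides by $2$. You instead invoke the Riemann--Hilbert doubling theorem $c_1(\wh E)=\mu(E,F)$ for the bundle pair $E=\sigma^*T^vP_\gamma$, $F=\sigma|_{S^1}^*T^vN$, and then must identify $\sigma_{db}^*T^vP_{\gamma^2}$ with $\wh E$. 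The two routes rest on the same underlying facts (the doubling theorem is essentially the additivity-plus-factor-of-two argument packaged as a bundle statement), but the paper's version works entirely at the level of characteristic numbers and so never has to confront the equatorial identification that you correctly flag as the delicate step: checking that the clutching $\varphi(s,x)=(-s,\gamma_1(x))$ composed with the base rotation is homotopic through complex isomorphisms to the conjugate gluing on $F\otimes_\R\C$. That check is routine but is the one place your write-up is not fully closed; the paper's formulation buys you the right to skip it, while yours makes the geometric content of the doubling explicit. For the area identity your hemisphere decomposition using $\iota_2^*\tau_{\gamma^2}=\tau_\gamma$ and the $\omega$-invariance under $\gamma_1$ is exactly what the paper means by ``follows by definition,'' just spelled out.
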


\begin{proof} The second one follows by definition. For the first one:  
$$2c_v([\sigma^2])=\mu_v([\sigma^2])=\mu_v([\sigma])+\mu_v([\sigma])=2\mu_v([\sigma])$$
hence the claim. 
\end{proof}

\subsection{Holomorphic  and anti-holomorphic sections} Let $j$ denote the standard complex structure on the disc, that is the anti-clockwise rotation by 90 degrees on the plane. Let $\{J_z\}$, $z\in D^2$, denote a smooth family of $\om$--tamed almost complex structures in $M$. Let 
$$\mathbb{H}:=H\otimes dx+H \otimes dy.$$ 
This is a  1--form over $D^2$ with values in $C_0^{\infty}(M)$. Let $X_{\mathbb{H}}$ be the induced 1--form with values in Hamiltonian vector fields of $M$, and let $X^{0,1}_{\mathbb{H}}$ denote the corresponding $(j,J)$ anti-holomorphic part.
%  Set 
%\[X^{0,1}_{\mathbb{K}}:=\frac{1}{2}(X_{\mathbb{K}}+J\circ X_{\mathbb{K}\circ j}).\]
These data provide an almost complex structure on $P_{\gamma}$ as follows:
\[  J_P(\tau,J)(z,x):=\left(\begin{array}{cc} j(z) &  0 \\ 
X^{0,1}_{\mathbb{H}}(z,x) & J_z(x)\\
\end{array} \right)\]
It is easy to check that this is fibered. Furthermore, if $\tau \in \mathcal{T}^{\pm}(\gamma)$ then $J_P(\tau,\pm J)$ is $\pm\tau$--tamed. In fact $J_P(\tau, J)$ is $\Omega_c$--tamed for $c$ large enough.

%\begin{rem}Such fibered almost complex structure depends on the choice of connection form (preserving the Lagrangian), as well as the choice of family $\{J_z\}$.
%\end{rem}

We consider the following boundary value problem for smooth sections $u:D^2\sra P_{\gamma}$:
\begin{equation}\label{R-Hproblem}
J_P\circ du= du\circ j \quad \text{and} \quad u(\partial D^2)\subset N.
\end{equation}
Fix a section class $A$ and let  $\M(P_{\gamma},A;\tau, J)$ denote the moduli space of $J_P(\tau,J)$--holomorphic sections representing $A$:
\[ \M(P_{\gamma},A;\tau, J):=\{u:D^2\sra P_{\gamma}|\eqref{R-Hproblem}\,\, \text{and}\,\, [u]=A\}.\]
For generic $(\tau, J)$ this is a manifold of dimension $n+\mu^v(A)$ [\cite{AkveldSalamon}, \cite{HuLalonde}]. Taking $-J$ instead of $J$, the moduli space $\M(P_{\gamma},A;\tau, -J)$ is similarly defined and is generically a manifold of dimension  $n-\mu^v(A)$.

\begin{rem}\label{fixedpoint=holsection} Note that a fixed point $x\in L$ of $\gamma$ defines a section of $P_{\gamma}$: $u:D^2\sra P_{\gamma}$, $z\sra (z,x)$. This section is   $J_P(\tau,\pm J)$--holomorphic. Indeed, for $z=s+it$ and for $u=(z,\tilde{u})$, the first part of  equation \eqref{R-Hproblem} is equivalent to 
\[\frac{\partial \tilde{u}}{\partial s}+J_z(\tilde{u})\frac{\partial\tilde{u}}{\partial t}+X_H(\tilde{u})-J_z(\tilde{u})X_H(\tilde{u})=0\]
If $x\in L$ is fixed under $\gamma$, then $\tilde(u)=x$ so that $\frac{\partial \tilde{u}}{\partial s}=\frac{\partial \tilde{u}}{\partial t}=0$. Furthermore, $X_H(\tilde{u})=0$ since $x$ is a fixed point.
\end{rem}

\subsection{The relative Seidel element} We now define the  relative Seidel element associated to a path $\gamma\in \P_L\Ham(M,\om)$. First, we recall the definition of the \emph{Lagrangien Seidel morphism} given in \cite{HuLalondeLeclercq}. 
Consider a Morse-Smale pair $(F,G)$ where $F\in C^{\infty}(N)$ is a Morse function and $G$ is a metric on $N$ such that: 
\begin{itemize}
\item[1)] $f_{\pm 1}:=\left.F\right|_{\pm 1}$ are Morse functions on the fibers $L_{\pm 1}$ over $\pm 1$ of $N$;
\item[2)]  $Crit f_{+}\cup Crit f_-=Crit F$;
\item[3)] $\max f_-+1<\max f_+$. 
\item[4)]  there exists neighbourhoods $U_{\pm}\cong (-\epsilon,\epsilon)$ of $\pm 1\in S^1$  trivializing the fiber bundle $N$,  with respect to which $\left. F\right|_{U_{\pm}}(t,x)=f_{\pm}(x)\mp \varphi(t)$ for any $(t,x)\in U_{\pm}\times L_{\pm 1}$ and where $\varphi$ is quadratic of index 1 at 0. 
\item[5)] we also ask that $\left.G\right|_{U_{\pm}}$ is a product metric $dt^2+G_{\pm}$ and that  $( f_{\pm}, G_{\pm})$ are Morse-smale pairs.
 \end{itemize}
 Such pairs $(F,G)$ can be chosen generically. 
 
Fix $\tau\in\mathcal{T}(\gamma)$ and a family $J=\{J_z\}_{z\in D^2}$ of $\om$--tamed almost complex structures of $M$. Let  $J_P(\tau,J)$ be the corresponding fibered almost complex structure of $P_{\gamma}$.  For $[\sigma]\in \widetilde{\H_{rel}}$ and for $x_-\in Crit(f_-)$ and $x_+\in Crit(f_+)$ let
$$ \M^{pearl}(x_-,x_+, [\sigma]; \tau, J, F,G)$$
denote the set of pearl trajectories from $x_-$ to $x_+$ representing the equivalence class of section classes $[\sigma]$. %Roughly speaking, elements of this moduli space are given by tuples
%$$(\ell_{-,0},u_{-,1},\ell_{-,1}, u_{-,2},\dots, u_{-,k_{-}},\ell_{-,k_{-}},u_{root}, \ell_{+,k_+}, u_{+,k_+},\ell_{+,k_+-1}, u_{+,k_+-1}, \dots,\ell_{+,1},u_{+,1},\ell_{+,0})$$
In particular elements of this moduli space have one $J_P(\tau,J)$--holomorphic section component with boundary on $N$,  and possibly many  $J_{\pm 1}$--holomorphic disk components with boundary on $L_{\pm 1}$.

For simplicity we will omit the auxiliary data $\tau, J,F$ and $G$ in the notations. This set  is a manifold of dimension 
$$ \dim \M^{pearl}(x_-,x_+, [\sigma])=|x_-|_N-|x_+|_N+\mu_N([\sigma])-1=|x_-|_L-|x_+|_L+\mu_v([\sigma]).$$

The Lagrangian Seidel morphism is defined to be:

\begin{defn}[\cite{HuLalondeLeclercq}] For $\gamma\in \P_L\Ham(M,\om)$, the Lagrangian Seidel morphism associated to $\gamma$ is  an endomorphism
\begin{eqnarray*}S_L(\gamma): R\la Crit_{\star}(f_-)\ra &\sra & R\la Crit_{\star}(f_+) \ra \\
x_- &\to & \sum_{\{[\sigma]\in \widetilde{\H_{rel}}||x_+|_L=|x_-|+\mu_v([\sigma])\}}\#_{\Z_2}  \M^{pearl}(x_-,x_+, [\sigma])x_+q^{-\mu_v([\sigma])}
\end{eqnarray*}
\end{defn}
This  only depends on the homotopy class  of paths with fixed endpoints of $\gamma$. 
Moreover, since $L$ is monotone with $N_L\geq2$, the  Lagrangian  Seidel morphism is a chain morphism (with respect to the pearl differential) and is generically well-defined with respect to  the data of  $J_P$,  $F$ and $G$ [\cite{BiranCorneaUniruling},\cite{HuLalondeLeclercq}]. Since it is a chain morphism and since $[L_-]$, the maximum of $Crit(f_-)$, defines a pearl cycle, $S_L(\gamma)([L_-])$ is also a cycle. 

\begin{defn} For $\gamma\in \P_L\Ham(M,\om)$, the \emph{Lagrangian Seidel element} associated to $\gamma$ is  the homology class \begin{eqnarray*}[S_L(\gamma)([L_-])]:=\left[\sum_{\{[\sigma]\in \widetilde{\H_{rel}}||x_+|_L=n+\mu_v([\sigma])\}}\#_{\Z_2}  \M^{pearl}([L_-],x_+, [\sigma])x_+q^{-\mu_v([\sigma])}\right]\in QH_{n}(L_+).
\end{eqnarray*}
\end{defn}

\section{Proofs of Theorem \ref{calculelementdeSeidel} and Theorem \ref{lengthminimizing}}

\subsection{Proof of Theorem \ref{calculelementdeSeidel}}
For  the proof   we follow the steps given by D. McDuff and S. Tolman in \cite{McDuffTolman} where they compute the absolute Seidel element of $S^1$--Hamiltonian manifolds.  Let $\sigma_x$ denote the relative section class associated to a fixed point $x\in L$. In the special case where $x\in F_{max}$ we write $\sigma_{max}$ instead. The lemma below follows from Lemma \ref{McDuffTolman1} and Lemma \ref{relationMaslovclassandChernclass}. 

\begin{lem}\label{weight=maslov} Let $\gamma\in \P$ with Hamiltonian $H$. If $x\in L$ is a fixed point of the Hamiltonian circle action of $\gamma$, then 
$$\mu_v(\sigma_x)=w(x)\quad and \quad \tau(\sigma_x)=-H(x)$$
%Moreover, if $B\in H_2(M,L)$ is the class of a disk formed by  the $\gamma$-orbit of an arc $z(t)\subset L$ between $z(0)=x$ and $z(1)=y$ with , then $B=\sigma_x-\sigma_y$. 
\end{lem}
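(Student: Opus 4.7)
The plan is to reduce the claim to the absolute case (loops of Hamiltonian diffeomorphisms) by applying the doubling construction from Section 2.4, and then invoking Lemma \ref{McDuffTolman1} (McDuff--Tolman) together with Lemma \ref{relationMaslovclassandChernclass}.

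First I would make the section $\sigma_x$ explicit. Since $x\in L$ is fixed under the $S^1$-action generated by $K=2H$, we have $\gamma_t(x)=x$ for all $t\in[0,1]$, so $\sigma_x:D^2\to P_\gamma$, $z\mapsto (z,x)$ is well defined and its boundary maps into the Lagrangian fibration $N$. Forming its double as in Section 2.4 yields
\[
\sigma_x^{db}:S^2\to P_{\gamma^2},\qquad z\mapsto(z,x),
\]
since both halves use $x$ and the gluing identification sends $(s,x)$ to $(-s,\gamma_1(x))=(-s,x)$. Thus $\sigma_x^{db}$ coincides with the section of $P_{\gamma^2}$ determined by $x$ as a fixed point of the full $S^1$-action.

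Next I would apply Lemma \ref{McDuffTolman1} to the fixed point $x$ of the loop $\gamma^2$, whose normalized Hamiltonian is $K=2H$. This gives
\[
c_v(\sigma_x^{db})=w(x)\qquad\text{and}\qquad \tau_{\gamma^2}(\sigma_x^{db})=-K(x)=-2H(x).
\]
Then Lemma \ref{relationMaslovclassandChernclass} translates these identities back to $P_\gamma$, yielding
\[
\mu_v(\sigma_x)=c_v(\sigma_x^{db})=w(x),\qquad \tau_\gamma(\sigma_x)=\tfrac12\,\tau_{\gamma^2}(\sigma_x^{db})=-H(x),
\]
which is exactly the statement of the lemma.

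There is no serious obstacle: the only point requiring a little care is verifying that the double of $\sigma_x$ genuinely agrees with the fixed-point section of $P_{\gamma^2}$, which is why identifying $\gamma_1(x)=x$ at the gluing step matters. Once that is observed, the two invariants are computed by direct application of the two earlier lemmas.
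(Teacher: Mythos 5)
Your proposal is correct and follows essentially the same route as the paper's own proof: double the constant section $\sigma_x$, apply Lemma \ref{McDuffTolman1} to the loop $\gamma^2$ with Hamiltonian $K=2H$, and translate back via Lemma \ref{relationMaslovclassandChernclass}. The only difference is that you explicitly verify that the double of $\sigma_x$ is the fixed-point section of $P_{\gamma^2}$, a point the paper leaves implicit.
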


\begin{proof}   Let $\sigma^2_x$ denote the class section in $P_{\gamma^2}$ corresponding to $\sigma_x$. Recall that, by definition, $\gamma^2$ has Hamiltonian $2H$.   Thus, by Lemma \ref{McDuffTolman1} we have 
$$c_v(\sigma^2_x)=w(x)\quad and \quad \tau_{\gamma^2}(\sigma^2_x)=-2H(x).$$ By  Lemma \ref{relationMaslovclassandChernclass} $$c_v(\sigma^2_x)=\mu_v(\sigma_x) \quad \text{and}\quad \tau(\sigma^2_x)=2\tau_{\gamma}(\sigma_x)$$ and  the two equalities follow.
% Now,  let $\ov{B}$ denote the $\gamma^2$-orbit of the arc in $L$ between $x$ and $y$. We can write $\ov{B}=B\# B'$ where $B'=[\phi_s(\phi_1(z(t)))]$. By Lemma \ref{McDuffTolman1}, $\ov{B}=\sigma_x^2-\sigma_y^2$. 
\end{proof}

 Let $\gamma\in\P$, $\tau\in \mathcal{T}(\gamma)$ and $J_P\in\J(P,\om,\tau,\Omega_{c})$ constructed from an $S^1$--invariant $J\in \J(M,\om)$ via the doubling procedure, that is $J_P$ is the pull-back of an $S^1$--invariant almost complex structure of $P_{\gamma^2}$ under the embedding $\iota_2:P_{\gamma}\sra P_{\gamma^2}$.  Fix $B\in H_2^D(M,L)$, and consider the moduli space of $J_P$--pseudo-holomorphic disks with no marked  points
$$ \ov{\M}(P_{\gamma},\sigma_{max}+B, J_P).$$
This moduli space consists of stable maps representing section classes $\sigma$. Concretely, those stable maps consist of   only one $J_P$--holomorphic section component  that we call the \emph{root}, and the other components   are $J_P$--holomorphic disks contained in some fibers of $P_{\gamma}$   that we will call \emph{bubbles}.  If the root represents the section class   $\sigma'\in H_2(P_{\gamma},N)$ and the bubbles represent fiber classes $B_i\in H_2(M,L)$, $i\in A$, we further have that $$\sigma_{max}+B=\sigma'+\sum_{i\in A}B_i.$$
%This moduli space is a union of strata, one for each configuration of stable map representing $\sigma$. 

\begin{prop}\label{propositionprincipale}
If $B\neq 0$ and $\om(B)\leq0$ the moduli space $\ov{\M}(P_{\gamma},\sigma_{max}+B, J_P)$ is empty. Furthermore, if $B=0$ then $J_P$ is regular,  and  the moduli space  $\ov{\M}(P_{\gamma},\sigma_{max}, J_P)$ is compact and can be identified with $L\cap F_{max}$.
\end{prop}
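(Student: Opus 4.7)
The strategy is to reduce to the absolute case via the doubling procedure of \S 2.4. Given any stable map in $\ov{\M}(P_\gamma, \sigma_{max}+B, J_P)$, I would double each component to produce a $J_P$-holomorphic stable map in $P_{\gamma^2}$: the root section doubles to a section of $P_{\gamma^2}$, interior sphere bubbles pair into conjugate fibers, and disk bubbles along $N$ glue to their reflections to form sphere bubbles in fibers of $P_{\gamma^2}$. By Lemma \ref{relationMaslovclassandChernclass} the doubled root lies in a section class of $P_{\gamma^2}$ with $c_v$-value $w_{max}+\mu_L(B)$ and $\tau_{\gamma^2}$-area $-2H_{max}+2\om(B)$.

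Since $J_P$ on $P_{\gamma^2}$ arises from an $S^1$-invariant $J\in\J(M,\om)$, the analysis of \cite{McDuffTolman} applies: every $J_P$-holomorphic section of $P_{\gamma^2}$ has $\tau_{\gamma^2}$-area at least $-2H_{max}$, with equality realized only by the invariant sections $\sigma_x\subset P_{\gamma^2}$ at fixed points $x\in F_{max}$. If $B\neq 0$ with $\om(B)\leq 0$, then the $\tau_{\gamma^2}$-area of the doubled root equals $-2H_{max}+2\om(B)$ minus twice the sum of the strictly positive $\om$-areas of the bubbles, hence is strictly less than $-2H_{max}$, contradicting minimality. When $B=0$ the same argument forces any stable map to consist of a single section whose double coincides with some $\sigma_x$ at $x\in F_{max}$, so the original section is constant equal to $(z,x)$; the Lagrangian boundary condition $u(\partial D^2)\subset N$ then forces $x\in L$. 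Conversely, each $x\in L\cap F_{max}$ yields a constant $J_P$-holomorphic section by Remark \ref{fixedpoint=holsection}, yielding a bijection $\ov{\M}(P_\gamma, \sigma_{max}, J_P)\longleftrightarrow L\cap F_{max}$.

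For regularity, assumption (A1) splits the normal bundle to $F_{max}$ in $M$ as a sum of $S^1$-invariant complex line bundles of strictly positive weights, and (A2) provides a compatible real splitting of $T_xL$ at each $x\in L\cap F_{max}$. The linearized Cauchy--Riemann operator at $\sigma_x$ with Lagrangian boundary on $N$ then decomposes as a tangential component along $F_{max}$ (trivially surjective, since $\sigma_x$ is constant with values in the fixed locus and the boundary condition is the Lagrangian $T_x(L\cap F_{max})$) plus a direct sum of $\bar\partial$-operators on $D^2$ valued in Hermitian line bundles with Lagrangian boundary of strictly positive Maslov index, each of which is surjective. Hence $J_P$ is regular along the moduli space, and the set-theoretic bijection above lifts to a diffeomorphism, which in particular gives compactness. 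The main technical input is the minimality statement invoked in the second paragraph, essentially the content of McDuff--Tolman's analysis of the absolute Seidel element; verifying that it transfers unaltered under doubling---especially correctly tracking how disk bubbles along $N$ translate into sphere bubbles in $P_{\gamma^2}$ and that the $\tau_{\gamma^2}$-area bookkeeping stays intact---is the main obstacle.
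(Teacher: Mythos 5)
Your overall architecture (area minimality forces emptiness for $\om(B)\le 0$ and identifies the class-$\sigma_{max}$ moduli space with constant sections at $L\cap F_{max}$; then a splitting argument for regularity) matches the paper's, but the central step is carried out by a reduction that does not work as stated. You double a $J_P$-holomorphic section of $(P_{\gamma},N)$ and then invoke the McDuff--Tolman area bound for \emph{holomorphic} sections of $P_{\gamma^2}$. The doubling of Section 2.4 is only a topological operation: the doubled map is well-defined and continuous, but it is not $J_P$-holomorphic across the equator (there is no Schwarz reflection principle for a general totally real boundary condition $N$ and a general fibered $J$; one would need an anti-holomorphic involution of $P_{\gamma^2}$ fixing $N$ and intertwining the almost complex structures, which is not available here). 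Accordingly, Lemma \ref{relationMaslovclassandChernclass} is purely homological ($c_v$ and $\tau$-values), and the minimality statement for holomorphic sections of $P_{\gamma^2}$ cannot be applied to the doubled root. The same issue affects your claim that boundary disk bubbles ``glue to their reflections'' into holomorphic spheres. The paper avoids this entirely: it proves the inequality $\Omega_c(\sigma)\ge\Omega_c(\sigma_{max})$ \emph{directly on} $P_{\gamma}$, using the explicit coupling form \eqref{explicitcouplingform} and the pointwise tameness estimate $\Omega_c(w,J_Pw)\ge \frac{c-H_{max}}{\pi}\,dx\wedge dy(h,J_0h)$, with equality forcing the section to be horizontal, hence constant at a fixed point of $F_{max}\cap L$. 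If you want to keep your strategy, you must either establish such a reflection principle in this setting or replace the doubling step by this direct fiberwise estimate.

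There is also a sign error in your regularity argument. Since $F_{max}$ is the \emph{maximum} fixed component and the action is semifree near it, the normal weights at $x\in F_{max}$ are all equal to $-1$, not strictly positive (if they were positive, $F_{max}$ could not be the maximum). Consequently the Riemann--Hilbert problem at a constant section splits into line-bundle problems whose partial indices are $0$ (tangential to $F_{max}$) and $-1$ (normal directions), as in \eqref{eq:1}. Surjectivity is then \emph{not} the automatic positive-index case; it requires Oh's theorem that the $\delbar$-operator on the disc with totally real boundary condition is surjective whenever all partial indices are at least $-1$ (for index $-1$ the operator has Fredholm index $0$ and is an isomorphism). Your conclusion survives, but only after invoking this sharper input, which your argument as written does not supply.
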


\begin{proof} We begin to show the first assertion. It is sufficient to show that for a  $J_P$--holomorphic section  with boundary on $N$ representing a section class $\sigma=\sigma_{max}+B$  one has:
\begin{equation}\label{eq:minimalarea}\Omega_{c}(\sigma)\geq  \Omega_{c}(\sigma_{max}),
\end{equation}
with equality only if $B=0$.  Indeed, suppose \eqref{eq:minimalarea} holds and assume there is a  $J_P$--holomorphic section representing $\sigma_{max}+B$ with, $B\neq 0$ and  $\om(B)\leq 0$. Then 
$$\Omega_{c}(\sigma)=\Omega_{c}(\sigma_{max}+B)=\Omega_{c}(\sigma_{max})+\om(B)\leq \Omega_{c}(\sigma_{max}).$$
This is impossible by \eqref{eq:minimalarea} unless $\om(B)=0$. This latter condition implies that  $B=0$ which contradicts $B\neq 0$. Let us now prove $\eqref{eq:minimalarea}$.  

Fix a point $[z,p]\in P_{\gamma}$ and consider $w\in T_{[z,p]}P_{\gamma}$. Write $w=h+v$ where $h$ and $v$ are respectively the  horizontal and vertical parts of $w$.  Choose $c>0$ such that $c> H_{max}$. Then, 
 \begin{eqnarray} \Omega_{c}(w,J_P w) &=&  (\om-dH\wedge dx- dH\wedge dy -\frac{H}{\pi} dx\wedge dy+\frac{c}{\pi}dx\wedge dy)(v+h, Jv+J_0h)\nonumber\\
 &=&  \om_p(v,Jv)+\frac{(c-H(p))}{\pi}dx\wedge dy(h,J_0h)\nonumber \\
&\geq& \frac{c-H_{max}}{\pi}dx\wedge dy(h,J_0h).\label{minarea}
\end{eqnarray}
% Write $w$ as a sum $h+v\in \ker\alpha_z\oplus T_xM$. Also choose $\kappa>0$ such that $1-\kappa K_{max}>0$. Then,
%  \begin{eqnarray*} \Omega_{\kappa}(w,J_P w) &=&  (\kappa\om-\kappa d(K\alpha)+d\alpha)(v+h, Jv+J_0h)\\
% &=& \kappa \om_x(v,Jv)+(1-\kappa K(x))d\alpha_z(h,J_0h)\\
%&\geq& (1-\kappa K_{max})d\alpha_z(h,J_0h).
%  \end{eqnarray*}
where the last inequality holds since $J$ is $\om$--compatible. Since $\frac{1}{\pi}dx\wedge dy$ evaluates to one on the disc of radius one,
$$\Omega_{c}(\sigma)\geq (c- H_{max})=\Omega_{c}(\sigma_{max})$$
for a  $J_P$--holomorphic section with boundary on $N$ representing a section class $\sigma=\sigma_{x}+B$, with $x$ some fixed point. Note that equality in \eqref{minarea} only occurs  when the vertical part of $w$ vanishes. Hence, equality holds only when $B=0$ and $x\in F_{max}\cap L$.

Next, we show that $\M(P_{\gamma},\sigma_{max}, J_P)$ is compact and coincides with $L\cap F_{max}$.  Consider a stable map representing $\sigma$. Such a stable map  consists of exactly one root $\sigma'$ and possibly many bubbles  representing classes $B_i$, $i\in A$, with positive $\om$--area. It follows that the only stable maps representing a class $\sigma$ such that $$\Omega_{c}(\sigma)\leq(c-H_{max})$$ are the constant sections $\sigma_x$, with $x\in F_{max}\cap L$, which proves the claim.

There remains to show that $J_P$ is regular for $\sigma_{max}$. Let $\B$ denote the set of smooth maps $u: (D^2,\partial D^2)\sra (P_{\gamma},N)$ representing the class $\sigma_{max}$. For $u\in \B$, set 
$$\E_u:=C^{\infty}(\Lambda^{0,1}_{J_P}(D^2, u^*TP_{\gamma}))\quad and \quad \E:=\bigsqcup_{u\in \B}\E_u.$$
We have to show that the linearization of $$\delbar_{J_P}:\B\sra\E,\quad u\mapsto du+J_P\circ du\circ j$$ is surjective at every $u\in \ov{\M}(P_{\gamma},\sigma_{max}, J_P)$ (at least between suitable completions of the source and the target). Upto completion, this linearization is given by 
$$L_{\delbar_{J_P},u}:C^{\infty}(u^*TP_{\gamma}, u^*TN)\sra C^{\infty}( \Lambda^{0,1}(D^2,u^*TP_{\gamma})).$$
Since we only consider sections one only needs to verify that 
$$L^v_{\delbar_{J_P},u}:C^{\infty}(u^*Vert, u^*(TN\cap Vert))\sra C^{\infty}( \Lambda^{0,1}(D^2,u^*Vert)).$$
We show that the partial indices of the holomorphic bundle pair $(u^*TP_{\gamma}, u^*TN)$ must be at least bigger than -1.  Then we conclude by applying the results of Oh in \cite{OhRiemann-Hilbert}. Let $u$ be a $J_P$--holomorphic section representing $\sigma_x$ with $x\in F_{max}$. 
Since $x$ is a fixed point,  $u^*Vert$ reduces to $T_xM\cong \C^n$,   $u^*(TN\cap Vert)$ reduces to $T_xL\cong \R^n$ and the restriction of $u$ to $S^1$ defines a loop of lagrangian subspaces in $ \C^n$. Moreover,  this loop is given by 
$$\left.TL\right|_{u(e^{i2\pi t})}\equiv d\gamma(t) T_xL\subset T_xM.$$
Since the action is semi-free, and since $J_P$ comes from an $S^1$--invariant almost complex structure  $J$ of $(M,\om)$,  $ d\gamma(t):\C^n\sra \C^n$  takes the following diagonal expression after an appropriate change of basis of $\C^n$:  
\begin{equation}
  \label{eq:1}
  d\gamma(t):=
 \left( \begin{array}{cccc}  e^{i \pi m_{1} t}  & 0    & \cdots &  0         \\
                                          0 & \ddots &  \ddots    &\vdots \\
                                          \vdots & \ddots  & \ddots & 0         \\
                                          0 &       \cdots       & 0   & e^{i \pi m_n t}  
  \end{array}\right)
\end{equation}
 where $m_1,...,m_n$ are the the weights of the action at $x$ and are given  by $m_1=...=m_l=0$, $l=\dim F_{max}$, and $m_{l+1}=...=m_{n}=-1$. By projecting on each factor of $\C^n$ the initial Riemann-Hilbert problem   splits to a direct sum of 1-dimensional Riemann-Hilbert problems of the form: $$\begin{cases}\delbar \xi_j(z)=0\,\, \text{on $D^2$} &\\ 
 \xi_j(e^{2\pi i t})\in \R\la e^{i \pi m_{j} t}\frac{\partial}{\partial x_j} \ra & 
\text{} \\
 \end{cases} $$
where $\xi_j$ denotes the projection of $\xi: D^2\sra \C^n$ to the $j^{th}$ factor, and where $x_j$ are the real coordinates in $\C^n$. In this situation the partial indices coincide with the weights (Maslov indices) of each summand. Here partial indices are all greater than -1, but Oh  \cite{OhRiemann-Hilbert} proved that regularity holds for holomorphic discs with partial indices greater than -1 which ends the proof.
\end{proof}

%\subsection{Vanishing of the other terms}  
%
%We begin  by  observing that there is a $\Z_2$--action on the  moduli space of pearl trajectories involved in the definition of the Lagrangian Seidel element. 
%
%\subsubsection{An involution on $\M()$}
%

To end the proof of Theorem \ref{calculelementdeSeidel},
we show the vanishing of all the other terms provided $F_{max}$ is of codimension two. 

\begin{prop} If $\codim(F_{max})=2$, then  $a_B=0$ for all $B\in\pi_2(M,L)$ with $\mu_L(B) >0$.
\end{prop}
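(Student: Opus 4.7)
The plan is to combine the grading formula from Theorem~\ref{calculelementdeSeidel} with the monotonicity bound $N_L \geq 2$; the whole argument reduces to degree bookkeeping, so there is essentially no substantive technical obstacle once the dimension of $F^L_{max}$ has been identified.

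First I would compute $\dim F^L_{max}$ under the hypothesis $\codim F_{max} = 2$. By assumption (A2) together with part~(1) of the remark following Definition~\ref{definitionextension}, one has $\dim F^L_{max} = \tfrac{1}{2}\dim F_{max} = n-1$.

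Next, I invoke the grading statement already established in Theorem~\ref{calculelementdeSeidel}: every correction term $a_B$ in the expansion of $S_L(\gamma)$ satisfies
$$\deg(a_B) = \dim F^L_{max} + \mu_L(B) = (n-1) + \mu_L(B).$$
Since $a_B$ is a class in $H_*(L;\Z_2)$ and $\dim L = n$, it must vanish as soon as $\deg(a_B) > n$, i.e.\ as soon as $\mu_L(B) > 1$.

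Finally, monotonicity closes the argument: by definition $N_L = \inf\{\mu_L(A) \mid A \neq 0\} \geq 2$, so any $B \in \pi_2(M,L)$ with $\mu_L(B) > 0$ automatically satisfies $\mu_L(B) \geq N_L \geq 2 > 1$. Every correction term therefore lives in a homological degree strictly greater than $\dim L$ and must vanish, so $a_B = 0$ for every such $B$, as claimed. The codimension-two hypothesis is precisely what pushes the leading degree to $n-1$, leaving no room for subleading contributions compatible with the Maslov gap imposed by monotonicity.
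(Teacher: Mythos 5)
Your argument is correct and is essentially the paper's own: the paper likewise notes that the pearl moduli space contributing $a_B$ is empty unless $|x_+|=n+\mu_v(\sigma_{max})+\mu_L(B)=\dim(F^L_{max})+\mu_L(B)$, and since $|x_+|\leq n=\dim L$ this forces $\mu_L(B)\leq \codim(F^L_{max})=1$, which is incompatible with $\mu_L(B)\geq N_L\geq 2$. The only cosmetic difference is that you quote the degree formula from the theorem statement while the paper re-derives it from the moduli-space dimension formula and Lemma \ref{weight=maslov}; the content is identical.
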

\begin{proof} This is done by a simple dimension argument. Note that for $\sigma=\sigma_{max}+B$, the moduli space $\M^{pearl}([L_-],x_+, [\sigma])$ is empty unless
$$|x_+|=n+\mu_v(\sigma_{max})+\mu_L(B).$$
 From Lemma \ref{weight=maslov} and equation \eqref{eq:1} we have 
$$n=\dim(F^L_{max})-w_{max}=\dim(F^L_{max})-\mu_v(\sigma_{max}).$$ 
This implies that
$$\mu_L(B)\leq\codim (F^L_{max}).$$
 In particular, it follows from monotonicity of $L$ that if $B$ is representable by a $J$--pseudo-holomorphic disk one must have  $$2\leq\codim (F^L_{max}).$$
We conclude that when $F_{max}$ is of codimension exactly 2, there are no contributions in the Seidel element coming from $\sigma_{max}+B$ with $\om(B)>0$. 
\end{proof}

\subsection{Proof of Theorem \ref{lengthminimizing}}
 
The idea here is to adapt M.Akveld and D.Salamon's line of proof for length minimizing exact Lagrangian loops in $\C P^n$ (see \cite{AkveldSalamon}). We will make use of the following general result they showed:

\begin{prop}[M.Akveld, D.Salamon, \cite{AkveldSalamon}, Lemma 5.2 and 5.3]\label{propositionAkveldSalamon} Let $\gamma\in\P_L\Ham(M,\om)$ with $\gamma\in\P$. Let $A\in H_2(P_{\gamma},N)$ be a section class. Suppose that for any $\tau\in \mathcal{T}^{\pm}(\gamma)$ there exists  a family $J=\{J_z\}_{z\in D^2}$ of $\om$--tame almost complex structures in $M$ such that the moduli space $\M(P_{\gamma},A;\tau, \pm J)$ is not empty. Then,
\begin{equation*}\epsilon^+(N)\geq -\la[\tau_0],A\ra\quad\text{and}\quad \epsilon^-(N)\leq -\la[\tau_0],A\ra
\end{equation*}
for any connection 2--form $\tau_0\in\mathcal{T}(\gamma)$. 
\end{prop}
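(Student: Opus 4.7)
My plan is to sandwich $\ell(N)$ between $\epsilon(N)$ and $\nu(N)$ and force equality. The trivial inequality $\nu(N)\le \ell(N)$ is just the definition of $\nu$ as an infimum, and Theorem~\ref{theoremeBAkveldSalamon} supplies $\epsilon(N)\le \nu(N)$, so the whole proof reduces to establishing a matching bound $\ell(N)\le \epsilon(N)$. Both quantities will in fact be squeezed by $H_{\max}-H_{\min}$, the oscillation over all of $M$ of the Hamiltonian $H$ generating $\gamma$.

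The upper bound on $\ell(N)$ is immediate. Since $\gamma\in\P$ the generating Hamiltonian $H$ is time-independent, and since $H$ is preserved by its own flow we have $H|_{L_t}=H|_L$ for every $t$; thus $\ell(N)=\max_L H-\min_L H\le H_{\max}-H_{\min}$. The main step is the matching lower bound on $\epsilon(N)$, which I will extract from Proposition~\ref{propositionAkveldSalamon} applied to two constant sections. Fix an $S^{1}$-invariant $\om$-tame almost complex structure $J$ on $M$ and pick $x^{+}\in L\cap F_{\max}$ and $x^{-}\in L\cap F_{\min}$. Each $x^{\pm}$ is fixed by $\gamma_t$ for every $t$ and already lies on $L$, so the constant sections $\sigma_{x^{\pm}}\colon z\mapsto (z,x^{\pm})$ have boundary on $N$; by Remark~\ref{fixedpoint=holsection} they are simultaneously $J_{P}(\tau,+J)$- and $J_{P}(\tau,-J)$-holomorphic for every $\tau\in\mathcal{T}(\gamma)$, so the hypothesis of Proposition~\ref{propositionAkveldSalamon} holds with $A=\sigma_{x^{\pm}}$. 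Lemma~\ref{weight=maslov} evaluates $\tau_{\gamma}(\sigma_{x^{\pm}})=-H(x^{\pm})=-H_{\max/\min}$, and plugging first $A=\sigma_{x^{+}}$ and then $A=\sigma_{x^{-}}$ into the proposition yields $\epsilon^{+}(N)\ge H_{\max}$ and $\epsilon^{-}(N)\le H_{\min}$, hence $\epsilon(N)\ge H_{\max}-H_{\min}$. Chaining the inequalities gives
$$\ell(N)\le H_{\max}-H_{\min}\le \epsilon(N)\le \nu(N)\le \ell(N),$$
so every inequality is an equality and in particular $\ell(N)=\nu(N)$, which is exactly the statement that $\gamma$ minimizes the Hofer length in its homotopy class with fixed endpoints.

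The main obstacle is the existence of the fixed points $x^{\pm}\in L\cap F_{\max/\min}$: the hypothesis only asks that $\gamma^{2}$ be semifree, not that $L$ actually meet the extremal fixed-point components, and when one of these intersections is empty the constant-section construction breaks down. In that degenerate regime one must either manufacture non-constant $J_{P}$-holomorphic sections whose $\tau_{\gamma}$-value saturates $-\max_L H$ or $-\min_L H$, or argue separately that $\ell(N)$ is already as small as it can be (since the natural candidate extremals on $L$ are no longer critical points of $H$ on $M$). The semifree hypothesis is precisely what lets the $S^{1}$-invariant analysis near the fixed sets go through, mirroring its role in Proposition~\ref{propositionprincipale} and Remark~\ref{fixedpoint=holsection}, and reducing the proof to the regularity and positivity statements already established in Section~4.
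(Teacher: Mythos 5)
You have not proved the statement in question. Proposition \ref{propositionAkveldSalamon} is the very input you invoke (``which I will extract from Proposition \ref{propositionAkveldSalamon} applied to two constant sections''); what you actually prove is Theorem \ref{lengthminimizing}, the Hofer length-minimization result, by sandwiching $\ell(N)$ between $\epsilon(N)$ and $\nu(N)$. As a proof of the proposition itself this is circular: you assume the proposition and derive a different statement from it. (Read instead as a proof of Theorem \ref{lengthminimizing}, your text essentially reproduces the paper's own argument for that theorem, including the use of Remark \ref{fixedpoint=holsection}, Lemma \ref{weight=maslov} and Theorem \ref{theoremeBAkveldSalamon}, together with a sensible caveat about $L\cap F_{max}$ or $L\cap F_{min}$ being possibly empty --- but that theorem is not what was asked.)

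The missing content is a positivity-of-energy argument; this is how the result is obtained in \cite{AkveldSalamon}, from which the paper quotes it without reproof. Fix $\tau_0\in\mathcal{T}(\gamma)$ and recall that for any $\tau\in\mathcal{T}(\gamma)$ one has $[\tau]=[\tau_0]+c(\tau,\tau_0)\left[\frac{1}{\pi}dx\wedge dy\right]$ in $H^2(D^2\times M,N;\R)$, and that a section class $A$ pairs to $1$ with $\left[\frac{1}{\pi}dx\wedge dy\right]$ since $\pi_*A=[D^2,S^1]$. If $\tau\in\mathcal{T}^{+}(\gamma)$, the hypothesis provides $J$ with $\M(P_{\gamma},A;\tau,J)\neq\emptyset$; the fibered structure $J_P(\tau,J)$ tames $\tau$, so for any such holomorphic section $u$ the integrand $u^*\tau$ is pointwise nonnegative and strictly positive where $du\neq 0$ (which holds everywhere, $u$ being a section), whence $\langle[\tau],A\rangle=\int_{D^2}u^*\tau>0$, i.e. $c(\tau,\tau_0)>-\langle[\tau_0],A\rangle$. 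Taking the infimum over $\mathcal{T}^{+}(\gamma)$ gives $\epsilon^{+}(N)\geq-\langle[\tau_0],A\rangle$. Symmetrically, for $\tau\in\mathcal{T}^{-}(\gamma)$ the structure $J_P(\tau,-J)$ tames $-\tau$, so a section in the nonempty space $\M(P_{\gamma},A;\tau,-J)$ yields $\langle[\tau],A\rangle<0$, i.e. $c(\tau,\tau_0)<-\langle[\tau_0],A\rangle$, and the infimum over $\mathcal{T}^{-}(\gamma)$ gives $\epsilon^{-}(N)\leq-\langle[\tau_0],A\rangle$. None of this --- taming, positivity of the $\tau$-energy of the hypothesized sections, and the affine structure of the classes $[\tau]$ evaluated on section classes --- appears in your proposal.
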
 

We begin by observing that in Proposition \ref{propositionprincipale}, the results are independant of the choice of connection 2--form in $\mathcal{T}(\gamma)$.  Note that the arguments in this proposition apply to show that $J_P$ is regular for $\sigma_{min}$, assuming the minimum fixed point set $F_{min}$ to be semi-free. Also, $\M(P_{\gamma},\sigma_{min},J_P)$ is non empty and coincides with $F_{\min}\cap L$.

 With this in mind, one argues as follows.   Assume $\tau\in \mathcal{T}^+(\gamma)$. Then, there is a regular $J_P$ such that $\M(P_{\gamma},\sigma_{max}, J_P)$ is non-empty. Similarly, assuming $\tau\in \mathcal{T}^-(\gamma)$,  there is a regular $J_P$ such that $\M(P_{\gamma},\sigma_{min}, J_P)$ is non-empty. Let $\tau_0=\tau_{\gamma}$. Then, by Proposition \ref{propositionAkveldSalamon} one has 
 \begin{eqnarray*}\epsilon(N)&=& \epsilon^+(\tau_{\gamma},N)- \epsilon^-(\tau_{\gamma},N)\\
&\geq& -\la[\tau_{\gamma}],\sigma_{max}\ra+\la[\tau_{\gamma}],\sigma_{min}\ra\\
 &\geq & \la[\om-H dx\wedge dy-dH\wedge dx-dH\wedge dy],\sigma_{min}-\sigma_{max} \ra\\
 &\geq & -(H_{min}-H_{max})\\
 &=& \ell(N)
 \end{eqnarray*}
It follows from  \cite[Theorem B]{AkveldSalamon} that:
$$ \ell(N)\leq\epsilon (N)\leq \nu(N).$$
 hence the proof.

\section{Application to Fano toric manifolds}

\subsection{Toric manifolds: the Delzant construction} The following is taken from \cite{CoxKatz} or \cite{AnnaCannasdaSilva} or \cite{GuilleminSternberg}. Let $\la,\ra:\R^{n}\times (\R^{n})^*\sra\R$ denote the standard pairing. Symplectic toric manifolds are compact connected symplectic manifolds $(M^{2n},\om)$ together with an effective Hamiltonian action of $\mathbb{T}^n$ and a choice of corresponding moment map $\mu$. It is well-known that the image $\Delta:=\mu(M)\subset (\R^n)^*$ is a  convex \emph{polytope}, meaning that this is an intersection of a collection of affine half planes in $(\R^{n})^*$.  Such half planes are determined by  vectors $\{v_i\}_{i\in 1,...,d}$ in $\R^n$ and real numbers $\{a_i\}_{i\in 1,...,d}$. Explicitly, the polytope is given by:
\begin{eqnarray*} \Delta:=\{f\in (\R^{n})^*| \la f,v_i\ra\geq a_i, i\in 1,...,d\}.
\end{eqnarray*}
The $v_i$'s represent inward-pointing normal vectors to the facets of the polytope, and the faces of $\Delta$ are in bijection with the sets
$$F_I:=\{f\in (\R^n)^*|\la f,v_i\ra=a_i, i\in I\},\quad I\subset[1,n],\quad F_I\neq\emptyset$$

Symplectic toric manifolds are in 1-1 correspondence with  \emph{Delzant polytopes}, i.e. polytopes  verifying:
\begin{itemize}
\item[1)] each vertex has $n$ edges.
\item[2)] the edges at any vertex $p$ are rational in the sense that they are given by some $p+tf_i$ with $t\in [0,1]$ and $f_i\in \Z^n$, $i=1,...,n$.
\item[3)] at each vertex the corresponding vectors $f_1,...,f_n$ can be chosen to be a $\Z$-basis of $\Z^n$.
\end{itemize}
The symplectic toric manifold $M$ with moment polytope $\Delta$ can be realized as a  symplectic reduction of a Hamiltonian torus action of $\mathbb{T}^{d-n}$ on $(\C^{d},\om_{st})$. The construction is as follows. Let $\{e_i\}_{i=1,...,d}$ denote the standard basis of $\R^d$. It is easy to see that the map  $\pi:\R^d\sra\R^n,\quad e_i\mapsto v_i$ descends to a surjective Lie group morphism:
$$\pi:\mathbb{T}^d\sra \mathbb{T}^n.$$
Let $N:=\ker\pi$. If $\iota:N\sra \mathbb{T}^d$ denotes the inclusion, then the composition of $\iota$ with the standard Hamiltonian action of $\mathbb{T}^d$ on $\C^d$
$$(e^{i\theta_1},...,e^{i\theta_d}).(z_1,...,z_d)=(e^{-2\pi i\theta_1}z_1,...,e^{-2\pi i\theta_d}z_d)$$  gives a Hamiltonian action of $\mathbb{T}^{d-n}$ on $\C^{d}$.  Let $\{w_1,...,w_{d-n}\}\in \ker \pi$  be a basis where $w_i=\sum_{j=1}^d w_i^j e_j$. 
  Then, 
$$\exp(w_i).(z_1,...,z_d)=(e^{-2\pi i w_i^1}z_1,...,e^{-2\pi iw_i^d}z_d)$$
Furthermore, considering  the following exact sequence of dualized Lie algebras:
$$0\sra (\R^n)^*\stackrel{\pi^*}{\sra}(\R^d)^*\stackrel{\iota^*}{\sra}(Lie(\ker\pi))^* \sra0,$$ 
and setting for $j=1,...,d$
$$\rho_j:=\iota^* e^*_j$$
then the action becomes:
$$\exp(w).(z_1,...,z_d)=(e^{-2\pi i \la\rho_1,w\ra}z_1,...,e^{-2\pi i\la\rho_d,w\ra}z_d).$$
 The moment of this action is then given by the composition $\iota^*\circ\mu_{st}$, where $$\mu_{st}(z_1,...,z_d)=(\pi|z_1|^2,..., \pi |z_d|^2)+(a_1,...,a_d).$$
 Explicitly one gets:
 \begin{eqnarray*}\iota^*\circ\mu_{st}(z_1,...,z_d)&=& \iota^*(\sum_{i=1}^d(\pi|z_i|^2+a_i)e_i^*)\\
 &=& \sum_{i=1}^d(\pi|z_i|^2+a_i)\rho_i\\
 &=& \sum_{i=1}^d \sum_{m=1}^{d-n}(\pi|z_i|^2+a_i)w_m^i w_m^*
 \end{eqnarray*}
 Then $0$ is a regular value for $\iota^*\circ\mu_{st}$. Moreover, $\ker\pi$ acts freely on the compact submanifold $Z:=(\iota^*\circ\mu_{st})^{-1}(0)$.  It follows that  $$M=\left.(\iota^*\circ\mu_{st})^{-1}(0)\right/N$$
is a compact manifold. Let  $\iota_Z:Z\sra \C^d$ denote the inclusion map  and  $p_M:Z\sra M$ denote the quotient map. Then, by the Marden-Weinstein theorem, $M$ is equipped with a canonical symplectic structure $\om$ such that:
$$p_M^*\om= \iota_Z^*\om_0.$$
With respect to $\om$ the action of the $n$--torus $\T^n=\left.\T^d\right/N$, which leaves $Z$ invariant,  is Hamiltonian. The corresponding moment map $\mu$ is defined by
$$\mu_{st}\circ\iota_Z=\pi^*\circ (\mu\circ p_M)$$  
 and  has  image  $\Delta$. \\

\subsection{Alternative construction of the toric manifold} Here we describe an alternative construction of the toric manifold $M$ as a complex manifold.  
%\subsubsection{Cones and fans}

%A \emph{cone} of dimension $r$ generated by the subset of linearly independant vectors  $\{v_1,...,v_r\}\subset\R^n$ is the subset:
%$$\sigma:=Cone(\{v_1,...,v_r\})=\{\sum_{i=1}^r a_iv_i| a_i\geq0\}\subset\R^n.$$
%A cone is called \emph{rational and smooth} if it admits a set of generators in $\Z^n$ which can be extended to a $\Z$-basis of $\Z^n$. A cone generated by $\{v_1',...,v_k'\}\subset \{v_1,...,v_r\} $ is called a \emph{face} of $\sigma$.

%An $n$--dimensional \emph{fan} is a finite set $\Sigma=\{\sigma_1,...,\sigma_m\}$ of cones of $\R^n$ such that:
%\begin{itemize}
%\item[i)] If $\sigma_i$ and $\sigma_j$ belong to $\Sigma$ then $\sigma_i\cap\sigma_j$ is a face of both $\sigma_i$ and $\sigma_j$.
%\item[ii)] If $\sigma\in \Sigma$ then the faces of $\sigma$ also belong to $\Sigma$.
%\item[iii)] $\bigcup_{i=1}^m\sigma_i=\R^n$, that is \emph{$\Sigma$ is complete}
%\end{itemize} 
%We denote by $\Sigma^{(r)}$ the subset of $\Sigma$ consisting of $r$-dimensional cones only.
%A Delzant polytope $\Delta\subset(\R^n)^*$ gives rise to a complete fan, the \emph{normal fan} $\Sigma(\Delta)$, generated by the normal vectors $\{v_1,...,v_d\}$ to the polytope: 
%$$\Sigma(\Delta):=\{\sigma_I:=Cone(\{v_i\}_{i\in I})| \textrm{$\sigma_I$ is rational and smooth}\}.$$ 

%Note that $\Sigma(\Delta)$   is in bijection with the set of faces of $\Delta$, i.e. the sets: 
%$$F_I:=\{f\in (\R^n)^*|\la f,v_i\ra=a_i, i\in I\},\quad I\subset[1,n],\quad F_I\neq\emptyset.$$
%\subsubsection{Construction of the toric manifold}

Extend the map previously seen  $\pi:\R^d\sra\R^n$ to a mapping $$\pi_{\C}:\C^d\sra \C^n.$$
Note that  $\pi_{\C}$ sends the standard lattice $\Z^d$ to the set of primitive integral generators of the facets of $\Delta$. 
%1--dimensional cones of $\Sigma$. 
Hence it  induces a map  between complex tori $\pi_{\C}:\T^d_{\C}\sra \T^n_{\C}$. Let $N_{\C}$ denote the kernel of $\pi_{\C}$ so that we have an exact sequence of complex groups:
$$0\sra N_{\C}\sra\T^d_{\C}\sra \T^n_{\C}\sra 0. $$
Consider now the linear diagonal  action $\kappa$ of $\T_{\C}^d $ on $\C^d$ given by
$$\kappa(\exp w).(z_1,...,z_d)=(\exp(w_1)z_1,...,\exp(w_d)z_d).$$
For any subset   $I=\{i_1,...,i_k\}\subset \{1,...,d\}$ set
%$$Z_I:= \{z\in\C^d| i\in I \Leftrightarrow  z_i=0\}.$$
$$\C^d_I:= \{z\in\C^d| z_i=0\quad iff\quad i\in I\}.$$
Note that this set is a $\T^d_{C}$--orbit and every  $\T^d_{C}$--orbit is actually of this type. Now consider 
the following subspace of $\C^d$:
$$\C^d_{\Delta}:= \bigcup_{\{I|F_I\,\textrm{is a face of}\, \Delta\}}\C^d_I.$$
This open  subset of $\C^d$ is in fact the biggest subset on which $N_{\C}$ has no singular orbit: in fact $N_{\C}$ acts on $\C^d_{\Delta}$ freely and properly \cite{GuilleminSternberg}.

The corresponding quotient manifold  $\left.\C^d_{\Delta}\right/N_{\C}$ is a  compact and  complex. Moreover, the  $\T^d_{C}$--action on $\C^d_{\Delta}$ induces  $\T^n_{C}$--action on the quotient. 
This is this  quotient that  corresponds to $M$. The relation between the two constructions is expressed in the following theorem:

\begin{theorem} The manifold $Z$ is contained in $\C^d_{\Delta}$ and the restriction of an  $N_{\C}$--orbit to $Z$ is an $N$--orbit. 
\end{theorem}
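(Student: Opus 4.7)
The plan is to establish the two assertions separately: first the inclusion $Z \subset \C^d_{\Delta}$, and then the coincidence of orbits.

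For $Z \subset \C^d_{\Delta}$, I would fix $z = (z_1,\dots,z_d) \in Z$ and examine the index set $I(z) := \{i \mid z_i = 0\}$. The defining equation $\iota^*\mu_{st}(z) = 0$ means that $\sum_i (\pi |z_i|^2 + a_i)\, e_i^* \in \ker\iota^* = \operatorname{im}\pi^*$, so there exists $f \in (\R^n)^*$ with $\langle f, v_i\rangle = \pi|z_i|^2 + a_i$ for all $i$. Since $\pi |z_i|^2 \geq 0$, we get $f \in \Delta$, and $\langle f, v_i\rangle = a_i$ precisely when $i \in I(z)$. Thus $f \in F_{I(z)}$, which is therefore a nonempty face of $\Delta$, so $z \in \C^d_{I(z)} \subset \C^d_{\Delta}$.

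For the orbit statement, one inclusion is immediate: since $N$ preserves $Z$ (being the moment level set of the $N$-action induced by $\iota^*\mu_{st}$), any $N$-orbit through a point of $Z$ stays in $Z$, so $N\cdot z \subset N_{\C}\cdot z \cap Z$. The reverse inclusion is the content of the theorem and is where the work lies. Given $w = \exp(u)\cdot z \in Z$ with $u \in \mathfrak{n}_{\C}$, split $u = u_1 + i u_2$ with $u_1,u_2 \in \mathfrak{n}$ and, using commutativity of $N_{\C}$, write $w = \exp(u_1)\cdot\exp(i u_2)\cdot z$. Since $\exp(u_1) \in N$ preserves $Z$, the point $z' := \exp(iu_2)\cdot z$ must already lie in $Z$.

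The key step, and the main technical point, is to show that $z' = z$. I would use the standard fact that on a Kähler manifold, the imaginary part of the complexified torus action is generated by gradient vector fields of moment-map components: the path $t \mapsto \exp(it u_2)\cdot z$ is the gradient flow of the function $\mu^{u_2} := \langle \iota^*\mu_{st}, u_2\rangle$ with respect to the Kähler metric on $\C^d$. Thus
\[
\frac{d}{dt}\mu^{u_2}\!\left(\exp(itu_2)\cdot z\right) = \bigl|X_{u_2}\!\left(\exp(itu_2)\cdot z\right)\bigr|^2 \geq 0,
\]
so $\mu^{u_2}$ is monotone along the path. Since $\mu^{u_2}$ vanishes at both endpoints ($z$ and $z'$ lie in $Z$), it is identically zero along the path, forcing $X_{u_2}(z) = 0$. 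Hence $z$ is fixed by the one-parameter subgroup $\exp(itu_2)$, so $z' = z$ and $w = \exp(u_1)\cdot z \in N\cdot z$. The freeness/properness of the $N_{\C}$-action on $\C^d_{\Delta}$ (cited from Guillemin--Sternberg) guarantees that the ambient setup is well-behaved, so the monotonicity argument is the genuine obstacle; after it, the theorem follows.
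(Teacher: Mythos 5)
The paper does not prove this theorem: it is quoted as a known fact in a section the author explicitly imports from the standard references (Guillemin--Sternberg et al.), so there is no in-paper argument to compare against. Judged on its own, your proof is correct and is essentially the standard Kempf--Ness argument those references give. The first half is exactly right: exactness of $0\to(\R^n)^*\stackrel{\pi^*}{\to}(\R^d)^*\stackrel{\iota^*}{\to}\mathfrak{n}^*\to 0$ converts $\iota^*\mu_{st}(z)=0$ into the existence of $f$ with $\la f,v_i\ra=\pi|z_i|^2+a_i$, and positivity of $\pi|z_i|^2$ places $f$ in the face $F_{I(z)}\cap\Delta$, so that face is nonempty and $z\in\C^d_{I(z)}\subset\C^d_\Delta$. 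The second half is also sound: the decomposition $\exp(u)=\exp(u_1)\exp(iu_2)$, the observation that $z'=\exp(iu_2)\cdot z=\exp(-u_1)\cdot w$ lies in $Z$ because $N$ preserves its own moment level set, and the monotonicity of $\mu^{u_2}$ along the gradient line $t\mapsto\exp(itu_2)\cdot z$ together force $|X_{u_2}|^2\equiv 0$ along the path, hence $z'=z$. Two small points worth making explicit: the sign in $\frac{d}{dt}\mu^{u_2}=\pm|X_{u_2}|^2$ depends on conventions, but since the derivative has a fixed sign and the endpoints both lie in $\{\mu^{u_2}=0\}$ the argument is insensitive to it; and your reduction of a general element of $N_\C$ to the form $\exp(u)$ uses that $N=\ker\pi$ is connected (so $N_\C\cong(\C^*)^{d-n}$ and $\exp$ is onto), which holds here because the Delzant condition makes $\pi:\Z^d\to\Z^n$ surjective. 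With those remarks your argument is complete.
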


\subsection{The anti-symplectic involution} This involution is the one induced by complex conjugation in $\C^d$. In fact, complex conjugation is well-defined on the subset $\C^d_{\Delta}$. Furthermore it commutes with the action of $N_{\C}$ since for every $w\in \T^d_{\C}$ we have
$$\ov{\kappa(w)(z)}=\kappa(\ov{w})\ov{z}.$$
Thus it defines an involution on the quotient space
$$\tau: M\sra M$$
satisfying 
\begin{equation}\label{actionVSinvolution} \tau(w\cdot x)= w^{-1}\cdot \tau (x),\quad \forall x\in M,\quad\forall w\in \T^d.
\end{equation}
That $\tau$ is anti-symplectic and that it preserves the moment map of the $\T^n$--action on $M$ then follows from the fact that complex conjugation is anti-symplectic with respect to the standard symplectic structure and that the  moment map associated to the diagonal $\T^d$--action on $\C^d$ is invariant under this conjugation. 

\subsection{The homology of toric manifolds and their real lagrangian.} 

In this section we describe the $\Z_2$--cohomology rings of the toric manifold $M$ and of its real lagrangian $L=\textrm{Fix}(\tau)$.  We will also explain how these rings are isomorphic, the isomorphism being given by a degree 2 ring homomorphism.

\subsubsection{On the homology of the  toric manifold}
% Consider $\Delta$ and its corresponding normal fan $\Sigma\equiv\Sigma(\Delta)$
The homology of the toric manifolds $M$ is generated by its \emph{toric divisors}, that is the  complex codimension 1 faces (facets) of $\Delta$. If  $D_1,...,D_d$ denote those facets then they are geometrically realized as follows 
$$D_k= Z\cap \C^d_k. $$
These determine codimension 2  cycles in $X$. Let $Y_1,...,Y_d\in H_{2n-2}(X)$ denote  the homology of these facets. Then 
$$H_*(M;\Z_2)=\frac{\Z_2[Y_1,...,Y_d]}{P(\Delta)+SR(\Delta)}$$
where $P(\Delta)$ and $SR(\Delta)$ denote the following ideals
$$P(\Delta):=\LA\sum_{k}\la \xi,v_k \ra Y_k| \xi\in (\Z_d)^* \RA$$
and
$$SR(\Delta):=\LA \prod_{i\in I}Y_i| \textrm{ $I\subset [1,d]$ is such that $D_I:=D_{i_1}\cap...\cap D_{i_k}= \emptyset$, $I$ is primitive} \RA$$
where $I$ is \emph{primitive} if for all $i_{m}\in I$, $D_{I\bs \{i_m\}}\neq \emptyset$.
In this setting the Chern class $c_1(X)$ of $X$ is given by  the Poincar\'e dual of  $Y_1+... +Y_d$.

%$$SR(\Delta):=\LA \prod_{i\in I}[D_i]| \textrm{ $I\subset [1,d]$ is such that $\sigma_I\notin \Sigma$} \RA.$$

We should also mention that there is a  natural isomorphism  between $H_2(M,\Z_2)$ and the set of tuples $A=(a_1,...,a_d)\in \Z^d$ such that 
$$\sum_{k}a_k v_k=0.$$
Under this isomorphism the pairing of $A$ with $PD(Y_i)$ (the Poincar\'e dual of $Y_i$) simply coincides with the projection to the $i$--factor of $A$:
$$\la A,PD(Y_i)\ra= a_i.$$
The following result due to Batyrev will be useful:

\begin{theorem}[Batyrev, \cite{Batyrev}]\label{Batyrev} For any primitive $I\in [1,d]$ there is a unique vector $a_I=(a_1,...,a_d)\in H_2(M,\Z_2)$ 
such  that:
 \begin{equation*} \textrm{$a_k=1$ for all $k\in I$},\quad \textrm{$a_k\leq 0$ for $k\notin I $}
 \end{equation*}
\end{theorem}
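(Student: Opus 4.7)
The approach is to realize $a_I$ as the homology class obtained from the Batyrev relation associated with the primitive collection $I$: we express $v:=\sum_{i\in I}v_i$ as the unique non-negative integer combination of the ray generators of the smallest cone of the normal fan of $\Delta$ containing it, and then verify that primitivity of $I$ forces this cone to be disjoint from $I$ itself.

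First I would invoke the basic features of the normal fan $\Sigma$ of $\Delta$: since $M$ is compact and $\Delta$ is Delzant, $\Sigma$ is complete, simplicial, and smooth, with rays $v_1,\ldots,v_d$. Completeness ensures every vector in $\R^n$ lies in the relative interior of a unique cone of $\Sigma$. Let $\sigma_J$ be that cone for $v$, indexed by $J\subset\{1,\ldots,d\}$. Smoothness says $\{v_j\}_{j\in J}$ extends to a $\Z$-basis of $\Z^n$, hence generates a saturated sublattice, so the equation
$$v=\sum_{j\in J}c_j\,v_j$$
admits a unique solution with $c_j\in\Z_{>0}$.

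Next I would establish the key claim $I\cap J=\emptyset$, which is where primitivity enters. Suppose for contradiction that $i_0\in I\cap J$. Primitivity of $I$ implies $D_{I\setminus\{i_0\}}\neq\emptyset$, so $\sigma_{I\setminus\{i_0\}}$ is a cone of $\Sigma$, and $v-v_{i_0}=\sum_{i\in I\setminus\{i_0\}}v_i$ lies in its relative interior. On the other hand, because $c_{i_0}\geq 1$ is a positive integer, we may rewrite
$$v-v_{i_0}=(c_{i_0}-1)\,v_{i_0}+\sum_{j\in J\setminus\{i_0\}}c_j\,v_j\in\sigma_J.$$
The fan property that the minimal cone containing a point is a face of every cone containing it then forces $\sigma_{I\setminus\{i_0\}}\preceq\sigma_J$, i.e.\ $I\setminus\{i_0\}\subset J$; combined with $i_0\in J$ this yields $I\subset J$, so $\sigma_I$ would be a face of $\sigma_J$ and hence belong to $\Sigma$, contradicting $D_I=\emptyset$.

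With disjointness in hand, define $a_I\in\Z^d$ by $a_i=1$ for $i\in I$, $a_j=-c_j$ for $j\in J$, and $a_k=0$ otherwise; the relation $\sum_k a_k\,v_k=v-\sum_{j\in J}c_j\,v_j=0$ puts $a_I$ in $H_2(M;\Z)$, and the sign conditions of the statement hold by construction. For uniqueness, any competitor $a'$ rewrites as $v=\sum_{k\notin I}(-a'_k)v_k$ with non-negative integer coefficients whose support generates a cone of $\Sigma$; the minimal-cone property forces this support to contain $J$, and simpliciality of cones of $\Sigma$ then forces the coefficients on $J$ to match the $c_j$ and to vanish off $J$, giving $a'=a_I$. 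The main obstacle I foresee is the disjointness step: the positive-integer lower bound $c_{i_0}\geq 1$ is what converts the otherwise soft minimal-cone analysis into a genuine contradiction, and it is there that the full Delzant $\Z$-basis hypothesis, rather than just completeness of $\Sigma$, is essentially used.
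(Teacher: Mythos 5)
The paper offers no proof of this statement: it is quoted verbatim from Batyrev, so your attempt can only be judged on its own terms. Your existence argument is correct and is in substance Batyrev's construction of the primitive relation: place $v=\sum_{i\in I}v_i$ in the relative interior of the minimal cone $\sigma_J$ of the complete, simplicial, smooth normal fan, extract positive integer coefficients $c_j$ from the Delzant ($\Z$-basis) condition, and deduce $I\cap J=\emptyset$ from primitivity via the minimal-cone/face argument. That disjointness step is the real content and you have it right: $i_0\in I\cap J$ would force $I\setminus\{i_0\}\subset J$, hence $I\subset J$, hence $\sigma_I\in\Sigma$ and $D_I\neq\emptyset$, contradicting primitivity.

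The uniqueness half, however, has a genuine gap, and it cannot be repaired for the statement as literally written. You assert that for a competitor $a'$ the support of $(-a'_k)_{k\notin I}$ ``generates a cone of $\Sigma$''; nothing forces this, and without it the minimal-cone argument never starts. Indeed uniqueness fails as stated: take $M=\C P^1\times\C P^1$ with rays $v_1=e_1$, $v_2=-e_1$, $v_3=e_2$, $v_4=-e_2$ and the primitive collection $I=\{1,2\}$; then both $(1,1,0,0)$ and $(1,1,-1,-1)$ lie in $\ker(\Z^4\to\Z^2)\cong H_2(M;\Z)$ and satisfy $a_k=1$ on $I$ and $a_k\le 0$ off $I$. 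The correct formulation (and the one the paper implicitly uses when writing $SR_L^Q(\Delta)$) singles out $a_I$ by the further requirement that $\{k: a_k<0\}$ spans a cone of the normal fan --- equivalently, that $a_I$ is the class of Batyrev's primitive relation, which is exactly what your existence argument produces. With that clause added, your uniqueness argument does close: $v\in\sigma_{\mathrm{supp}}$ forces $J\subset\mathrm{supp}$, linear independence of the generators of a simplicial cone pins down the coefficients, and strict positivity on the support gives $\mathrm{supp}=J$. (A minor point: the target should be $H_2(M;\Z)$, identified with $\ker(\Z^d\to\Z^n)$, for the inequalities $a_k\le 0$ to be meaningful.)
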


\subsubsection{The cohomology of the real lagrangian}

Let $g$ be a $\tau$--invariant Riemmanian metric on $M$. Let $g$ also  denote the restriction of $g$ to $L$. Note that for generic $\xi\in Lie(\T^n)$ the function
$$f_{\xi}: M\sra \R,\quad x\sra \la \mu(x),\xi\ra$$
is Morse. Moreover, there exists a second category Baire subset of $\tau$--invariant metrics such that the pair $(f_{\xi},g)$ is  Morse-Smale. Then  $Crit( f_{\xi})$ corresponds to the vertices of the Delzant polytope, i.e. the critical point of the moment map. Moreover, for any vertex $p$ the Morse index is given by $$|p|_M=2 \times \#\{\textrm{1--dimensional faces $\psi$ at $p$ such that $\la\psi, \xi\ra<0$}\},$$
hence $f_{\xi}$ is perfect. Let $\left.f_{\xi}\right|_L$ be the restriction of $f_{\xi}$ to $L$. It is not hard to see that 
$$Crit \left.f_{\xi}\right|_L =Crit  f_{\xi}. $$
Furthermore the restricted pair $(\left.f_{\xi}\right|_L,g)$  is also Morse-Smale. In fact,  the inclusion of $L$ into $M$ induces an isomorphism of Morse chain complexes:

\begin{theorem}[Duistermaat \cite{Duistermaat}, Haug \cite{Haug}]\label{theoremhomologielag} The map
\begin{equation}incl:Crit_k \left.f_{\xi}\right|_L\sra Crit_{2k} f_{\xi},\quad p\mapsto p\end{equation}
defines a ring isomorphism between Morse homologies with $\Z_2$--coefficients that doubles the degrees:
$$incl:H_*(L,\Z_2)\stackrel{\cong}{\longrightarrow} H_{2*}(M,\Z_2).$$ 
\end{theorem}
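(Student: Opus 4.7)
The strategy is to exploit the $\tau$-invariance of all the data to match critical points, indices, and differentials on both sides. First I would check that $\operatorname{Crit}(f_\xi|_L) = \operatorname{Crit}(f_\xi)$: the critical points of $f_\xi$ on $M$ are precisely the $\T^n$-fixed points (the $\mu$-preimages of the vertices of $\Delta$), and by the intertwining relation \eqref{actionVSinvolution} each such point is fixed by $\tau$ and hence lies in $L$. At such a $p$, choose a $\tau$-invariant $\om$-compatible almost complex structure $J$ so that $T_pM\cong\C^n$ with $T_pL=\R^n$ the fixed set of complex conjugation. Since $X_{f_\xi}$ generates a circle subgroup of $\T^n$ acting $J$-holomorphically, the Hessian $\operatorname{Hess}_p(f_\xi)$ is $J_p$-linear and its negative eigenspace splits as $E^-_p = E^-_{p,L}\oplus J_pE^-_{p,L}$. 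This proves $|p|_M = 2|p|_L$; in particular every Morse index of $f_\xi$ on $M$ is even.

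Because all Morse indices of $f_\xi$ on $M$ are even, the Morse differential of $f_\xi$ vanishes for degree reasons, so $|\operatorname{Crit}_{2k}(f_\xi)| = \dim H_{2k}(M;\Z_2)$. Combined with the identification of critical sets, the weak Morse inequalities for $(f_\xi|_L, g)$ yield
\[
\dim H_k(L;\Z_2)\;\leq\;|\operatorname{Crit}_k(f_\xi|_L)|\;=\;|\operatorname{Crit}_{2k}(f_\xi)|\;=\;\dim H_{2k}(M;\Z_2).
\]
The reverse total inequality is supplied by Smith theory applied to the involution $\tau$ (whose fixed set is $L$), together with the classical equality of $\Z_2$-Betti numbers for smooth real toric manifolds. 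Equivalently, one may argue via the CW decomposition of $M$ given by the unstable manifolds of $(f_\xi,g)$: each such cell is a complex affine subspace, whose transverse intersection with $L$ is a real affine subspace of half the dimension, so the induced CW structure on $L$ is identified term-by-term with that of $M$ via $\operatorname{incl}$. Either way $f_\xi|_L$ is perfect and $\operatorname{incl}$ descends to a degree-doubling additive isomorphism.

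For the multiplicative structure I would use the intersection description of the Morse product. Because $f_\xi$ and $g$ are $\tau$-invariant, the unstable manifolds $W^u(p;M)$ are $\tau$-invariant and meet $L$ transversally in $W^u(p;L)$ of half the real dimension. After perturbing $g$ within $\tau$-invariant metrics, a triple intersection $W^u(p;M)\pitchfork W^u(q;M)\pitchfork W^s(r;M)$ of expected dimension zero is a $\tau$-invariant finite set, decomposing into free $\tau$-orbits of size two together with fixed points that constitute precisely the corresponding triple intersection on $L$. Reducing modulo $2$, the structure constants of the Morse product on $M$ coincide with those on $L$ under $\operatorname{incl}$, which therefore preserves the intersection product and is a degree-doubling ring map.

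The main obstacle is the perfection of $f_\xi|_L$. Parity of indices is no longer available since $f_\xi|_L$ realizes all values from $0$ to $n$, so one must import an external input—either Smith's equality for real toric manifolds, or, as in Haug's treatment, an explicit matching of cellular decompositions coming from the Delzant construction. The multiplicative comparison is conceptually clean via the $\tau$-parity argument, but requires one to preserve Morse--Smale transversality within the class of $\tau$-invariant pairs $(f_\xi, g)$, which is where the remaining technical care is concentrated.
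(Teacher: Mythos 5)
This theorem is not proved in the paper: it is imported from Duistermaat and Haug, and the surrounding text only records the setup that your first paragraph reproduces correctly (identification of $\mathrm{Crit}(f_\xi|_L)$ with $\mathrm{Crit}(f_\xi)$ using $\tau$-invariance of $\nabla f_\xi$ and $\mu\circ\tau=\mu$, the index doubling $|p|_M=2|p|_L$ from the weight decomposition of $T_pM\cong\C^n$, and perfection of $f_\xi$ on $M$ by evenness of indices). The paper itself flags that the entire difficulty is the $\Z_2$-perfection of $f_\xi|_L$ and refers to \cite{Haug} for it. Judged as a self-contained argument, your proposal has a genuine gap exactly there, and the way you propose to close it is off in two respects. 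First, Smith theory gives $\sum_k\dim H_k(L;\Z_2)\leq\sum_k\dim H_k(M;\Z_2)$, i.e. the \emph{same} direction as the chain of Morse inequalities you already wrote, so it cannot supply the reverse total inequality; that reverse inequality (maximality of real toric manifolds) \emph{is} the additive content of Duistermaat's theorem, so invoking ``the classical equality of $\Z_2$-Betti numbers for smooth real toric manifolds'' is circular. Second, the cellular alternative is closer to the actual proof but incomplete as stated: matching the Bialynicki-Birula cells of $M$ with their real loci does give one $k$-cell of $L$ per $2k$-cell of $M$, but on $L$ cells occur in every degree, so the mod $2$ vanishing of the cellular (equivalently Morse) differential on $L$ is not a parity statement and must be argued. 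This is precisely the step Duistermaat and Haug supply, e.g. by observing that unparametrized gradient lines of $f_\xi$ in $M$ between critical points of index difference $2$ form circles (orbits of a circle subgroup of $\T^n$), on which $\tau$ acts with $0$ or $2$ fixed points, so the trajectories lying in $L$ between index-adjacent critical points of $f_\xi|_L$ come in pairs and cancel mod $2$.

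Your treatment of the ring structure is a reasonable sketch and in the spirit of how the multiplicative statement is obtained: a zero-dimensional $\tau$-invariant triple intersection splits into free $\tau$-orbits and configurations inside $L$, so the structure constants agree mod $2$. But two points you only gesture at need care: one must achieve Morse--Smale and triple-intersection transversality simultaneously in $M$ and in $L$ within $\tau$-invariant data (transversality along $L$ plus transversality in the normal directions is not automatic from genericity in $M$ alone), and one must check that a configuration transverse in $L$ is also a transverse point of the $M$-moduli space, since otherwise the fixed-point count and the $L$-count need not agree. With those supplied the multiplicative comparison is sound; without the perfection input above, however, the proposal does not amount to a proof of the theorem.
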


The restriction to $\Z_2$--coefficients is essential; it is needed to show that $ \left.f_{\xi}\right|_L$ is perfect (see \cite{Haug}). Note that it is easy to  find  examples where the theorem doesn't hold for other coefficient-rings, for example $\R P^n$ in $\C P^n$.

\subsubsection{The quantum cohomology of the real lagrangian} Set $$P_L(\Delta):=incl^{-1} P(\Delta)\quad\text{and}\quad SR_L(\Delta):= incl^{-1}(SR(\Delta)).$$
By theorem \ref{theoremhomologielag} and from the description of $H^*(M,\Z_2)$ we can write
$$H_*(L;\Z_2)=\frac{\Z_2[X_1,...,X_d]}{\la P_L(\Delta)+SR_L(\Delta)\ra}$$
where $X_j$ is a formal variable representing the homology class of $D_j\cap L$. 
Let $I=(i_1,...,i_d)$  be a multi-index of non negative integers. Then set:
$$X^I:= X_1^{i_1}...X_d^{i_d}.$$
The degree $|I|$ of $X^I$ is naturally $\sum_k i_kd_k$ where  $d_k$ stands for the degree of $X_k$. Since the homology of $L$ is generated by the classes of degree $n-1$, the Lagrangian is either $wide$ or $narrow$ according to \cite{BiranCorneaUniruling}. Luis Haug \cite{Haug} showed that the Floer differential vanishes for the standard   complex structure which happens to be generic, thus proving that real Lagrangians are actually wide:

\begin{theorem}\cite[Theorem A]{Haug} The real Lagrangians $L$ are wide as $d^Q$ generically vanishes. Furthermore, the isomorphism 
$QH_*(L;\Lambda_L)\cong H_*(L;\Z_2)\otimes \Lambda_L$
is canonical. 
\end{theorem}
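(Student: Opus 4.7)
The plan is to prove that the pearl differential $d^Q = d_0 + d_1 q^{-N_L} + d_2 q^{-2N_L} + \cdots$ reduces to its Morse part $d_0$ for a carefully chosen set of auxiliary data, namely the standard integrable complex structure $J_0$ on the toric manifold $M$ together with a $\tau$-invariant Morse--Smale pair $(f_\xi|_L, g)$ built out of a generic element $\xi \in \mathrm{Lie}(\T^n)$. The key geometric input is that complex conjugation on $\C^d$ descends through the Delzant reduction to the anti-symplectic involution $\tau$, which fixes $L$ pointwise and satisfies $\tau^{\ast} J_0 = -J_0$. Consequently $\tau$ induces an involution on the moduli spaces of $J_0$-holomorphic disks with boundary on $L$ by $u \mapsto \tau\circ u \circ c$, where $c$ denotes complex conjugation on $D^2$, and similarly on the pearl moduli spaces since all critical points of $f_\xi|_L$ are vertices of $\Delta$ (and hence $\tau$-fixed).

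First, I would establish regularity of $J_0$. Since the Cho--Oh classification gives an explicit description of $J_0$-holomorphic disks in a toric manifold as Blaschke-type products with boundary on a torus fiber, together with the observation that the real Lagrangian intersects each such family transversally, one reduces the problem to a one-dimensional Riemann--Hilbert problem with nonnegative partial indices, and automatic regularity (in the sense of \cite{OhRiemann-Hilbert}) applies exactly as in the proof of Proposition \ref{propositionprincipale}.

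Second, I would run the $\tau$-pairing argument on zero-dimensional pearl moduli spaces contributing to $d_k$ for $k \ge 1$. Since $\tau$ permutes such trajectories and fixes their endpoints, the count mod $2$ vanishes provided the induced $\Z_2$-action is free. A fixed pearl trajectory would consist of $\tau$-equivariant Morse flow segments (forced to be generic for a generic $\tau$-invariant metric) and disks satisfying $\tau\circ u = u\circ c$; doubling such an equivariant disk along its real boundary produces a $J_0$-holomorphic sphere whose Chern number equals the Maslov index of $u$. Monotonicity and the Fano hypothesis on $(M,\om)$ then impose a dimensional constraint that rules out such invariant disks appearing in zero-dimensional pearl components through generic (i.e., $\tau$-moved) marked points. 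This yields $d_k \equiv 0 \pmod 2$ for every $k \ge 1$.

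Third, once $d^Q = d_0$, the canonical identification $QH_\ast(L;\Lambda_L) \cong H_\ast(L;\Z_2)\otimes \Lambda_L$ is immediate from the construction of the pearl complex, since the chain module is literally $R\langle \mathrm{Crit}\, f_\xi|_L\rangle\otimes \Lambda_L$ and the differential has collapsed onto the Morse differential of $L$. The main obstacle is the free-action step: one must verify that the specific pearl configurations computed by Cho--Oh do not produce $\tau$-invariant representatives in the exact dimensions relevant to $d_k$. Handling this cleanly requires the dimension count coming from the doubling identification $c_1(\hat{u}) = \mu_L(u)$ together with the elementary observation that a $\tau$-invariant pearl trajectory is constrained to pass through $\tau$-fixed intermediate points, forcing it into a positive-codimension locus — so that the generic $0$-dimensional count sees only free $\Z_2$-orbits.
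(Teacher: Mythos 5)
You should first be aware that the paper does not prove this statement: it is quoted verbatim as Theorem A of Haug's paper \cite{Haug}, and the only commentary offered is the one-line summary that ``the Floer differential vanishes for the standard complex structure which happens to be generic.'' So there is no internal proof to compare against; what you are really attempting is a reconstruction of Haug's argument. Your overall strategy --- work with the integrable $J_0$, exploit that the anti-symplectic involution $\tau$ is $J_0$-anti-holomorphic and fixes $L$ pointwise, pair pearl trajectories via $u\mapsto \tau\circ u\circ c$, and use perfectness of $f_\xi|_L$ over $\Z_2$ for the Morse part --- is the right kind of strategy and is consistent with the summary the paper gives.

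However, two steps as written do not hold up. (i) Regularity: the Cho--Oh classification describes $J_0$-holomorphic disks with boundary on a \emph{torus fiber} as Blaschke products; it says nothing about disks with boundary on the real Lagrangian $\mathrm{Fix}(\tau)$, which is a completely different, non-orbit Lagrangian. Nor does the argument of Proposition \ref{propositionprincipale} transfer ``exactly'': there the partial indices could be read off because the section sat over a fixed point and $d\gamma(t)$ was explicitly diagonalized, whereas for an arbitrary disk on $L$ the splitting of the Riemann--Hilbert problem and the non-negativity of the partial indices is precisely what must be proved (it comes from doubling to a real rational curve and the Fano condition, not from the computation in Section 4). (ii) The freeness of the $\Z_2$-action is the entire content of the vanishing of $d_k$, $k\ge 1$, and your proposed resolution cannot work as stated: since $\tau|_L=\mathrm{id}$, every critical point, every stable and unstable manifold, and every incidence condition of a pearl trajectory is automatically $\tau$-invariant, so there are no ``$\tau$-moved generic marked points'' available to push invariant configurations into positive codimension; and $J_0$ is a fixed, non-generic almost complex structure, so conjugation-invariant disks cannot be perturbed away either. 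What is needed is an honest argument at the fixed locus of the involution --- for instance, a disk with $\tau\circ u\circ c=u$ maps the real diameter into $L$ and hence contains a half-disk with boundary on $L$ of Maslov index $\mu_L(u)/2$, which must then be confronted with monotonicity, the bound $N_L\ge 2$, and the rigidity of the zero-dimensional pearl configuration --- and this, together with the regularity statement, is where the actual content of the theorem lies.
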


By the theorem above each of the $X_i$  defines an element in $QH_*(L;\Lambda_L)$ also denoted $X_i$.  
%Let $X^{I,Q}$ denote the element obtained by making the Lagrangian quantum  product of $X_k$ with multiindex $I$. 
By Theorem \ref{Batyrev}, for any primitive $I\subset [1,d]$  there is a unique vector $a_I=(a_1,...,a_d)\in H_2(M;\Z_2)$ 
such  that:
 \begin{equation}\label{relationBatyrev} \textrm{$a_k=1$ for all $k\in I$},\quad \textrm{$a_k\leq 0$ for $k\notin I $}
 \end{equation}
 From Theorem \ref{theoremhomologielag} the same relations exist in $H_*(L;\Z_2)$.     Write  $I=(i_1,..., i_l)$ and let $J=(j_1,..., j_m)$ denote the complement of $I$ in $[1,d]$. Set 
$$P_L^Q(\Delta):=P_L(\Delta)$$ 
and 
$$SR_L^Q(\Delta):=\LA X_{i_1}\cdots X_{i_l} -X^{|a_{j_1}|}_{j_1}\cdots X^{|a_{j_m}|}_{j_m}q^{-l+\sum^m_{r=1}|a_{j_r}|}| \textrm{ $I=(i_1,...,i_l)\subset [1,d]$, $I$ is primitive} \RA.$$ The remaining of this section is dedicated to show  the following: 

%$$SR_L^Q(\Delta):=\LA X^{a_I,Q}-q^{-\sum_{j=1}^d a_j}| \textrm{ $I=(i_1,\cdot i_l)\subset [1,d]$,  $D_I:=D_{i_1}\cap...\cap D_{i_l}= \emptyset$, $I$ is primitive} \RA.$$ The remaining of this section is dedicated to show  the following: 
 
 \begin{prop}\label{Quantumpresentation}
$$QH(L;\Lambda_L)\cong \frac{\Z_2[X_1,...,X_d][q^{-1},q]}{P_L(\Delta)+SR_L^Q(\Delta)}.$$
\end{prop}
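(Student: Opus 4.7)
The plan is to construct a ring surjection
\[ \Phi: \Z_2[X_1,\ldots,X_d][q^{-1},q] \twoheadrightarrow QH(L;\Lambda_L), \quad X_i \mapsto [D_i \cap L], \]
and show that $\ker \Phi = P_L(\Delta) + SR_L^Q(\Delta)$. Surjectivity is immediate: Haug's wideness theorem provides a canonical isomorphism $QH(L;\Lambda_L) \cong H_*(L;\Z_2) \otimes \Lambda_L$, and $H_*(L;\Z_2)$ is multiplicatively generated by the classes $[D_i \cap L]$ via Theorem \ref{theoremhomologielag}. The inclusion $P_L(\Delta) \subseteq \ker \Phi$ is automatic since these relations already vanish in $H_*(L;\Z_2)$ and are transported directly by Haug's canonical isomorphism, without any quantum corrections.

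For the nontrivial inclusion $SR_L^Q(\Delta) \subseteq \ker \Phi$, I would apply Theorem \ref{calculelementdeSeidel} to carefully chosen circle subactions. For each facet normal $v_j$, consider the subaction of $\T^n$ generated by $-v_j$: its moment map $-\la\mu(\cdot),v_j\ra$ is maximized precisely along $D_j$, the Delzant condition on $v_j$ makes this subaction semifree near $D_j$ with a single complex-normal weight equal to $-1$, and $D_j \cap L$ is Lagrangian in $D_j$ by the remark following Definition \ref{definitionextension}, together with the compatibility \eqref{actionVSinvolution} between the $\T^n$-action and the anti-symplectic involution. The half-rotation path $\gamma_j \in \P$ has endpoint $e^{-i\pi v_j}$, whose square is the identity in $\T^n$; hence $\gamma_j(1)$ preserves $L$ and $\gamma_j \in \P_L\Ham(M,\om)$ with $\gamma_j^2$ the full $S^1$-subaction. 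Since $\codim D_j = 2$, all lower order terms vanish and Theorem \ref{calculelementdeSeidel} gives $w_{max} = -1$ and $S_L(\gamma_j) = X_j \otimes q$.

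For a primitive collection $I = \{i_1,\ldots,i_l\}$ with Batyrev vector $a_I = (a_1,\ldots,a_d)$, the identity $\sum_j a_j v_j = 0$ in $\Z^n$ forces the two concatenated paths
\[ \gamma_{i_1} * \cdots * \gamma_{i_l} \quad \text{and} \quad \underbrace{\gamma_{j_1} * \cdots * \gamma_{j_1}}_{|a_{j_1}|\text{ times}} * \cdots * \underbrace{\gamma_{j_m} * \cdots * \gamma_{j_m}}_{|a_{j_m}|\text{ times}} \]
to terminate at the same element $e^{-i\pi \sum_{i \in I} v_i} = e^{-i\pi \sum_{j \notin I} |a_j| v_j}$ of $\T^n$. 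Both paths lift to piecewise-linear trajectories in $\mathrm{Lie}(\T^n) = \R^n$ between the same two points, hence are homotopic rel endpoints in $\R^n$; projecting through $\T^n \subseteq \Ham(M,\om)$ and keeping the endpoint fixed yields the required homotopy within $\P_L\Ham(M,\om)$, the endpoint being $L$-preserving throughout because its square is the identity. By the homotopy invariance and multiplicativity under concatenation of the Lagrangian Seidel morphism \cite{HuLalondeLeclercq}, this translates into the identity
\[ X_{i_1} \cdots X_{i_l} \cdot q^l = X_{j_1}^{|a_{j_1}|} \cdots X_{j_m}^{|a_{j_m}|} \cdot q^{\sum_r |a_{j_r}|} \]
in $QH(L;\Lambda_L)$, which rearranges precisely to the defining relation of $SR_L^Q(\Delta)$.

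Finally, to upgrade the inclusion $P_L(\Delta) + SR_L^Q(\Delta) \subseteq \ker \Phi$ to an equality, I would compare $\Lambda_L$-module bases: the quotient ring on the source side admits a $\Lambda_L$-basis consisting of Stanley-Reisner reduced monomials modulo $P_L(\Delta)$ (using $SR_L^Q$ to reduce any monomial whose support contains a primitive subcollection, at the cost of a power of $q$), and $\Phi$ maps this basis onto the basis of $H_*(L;\Z_2) \otimes \Lambda_L$ provided by Theorem \ref{theoremhomologielag} and Haug's wideness theorem. The main obstacle I anticipate is the homotopy step: one must verify that the $\T^n$-path homotopy can be performed within $\P_L\Ham(M,\om)$ while keeping the endpoint constant at an $L$-preserving element, and that this is compatible with the definition of the Lagrangian Seidel morphism in \cite{HuLalondeLeclercq}, including multiplicativity of $S_L$ under concatenation of paths with distinct endpoints in $\mathrm{Ham}_L(M,\om)$.
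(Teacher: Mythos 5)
Your proposal follows essentially the same route as the paper: the quantum Stanley--Reisner relations are produced by applying Theorem \ref{calculelementdeSeidel} to the half-rotation paths dual to the facets (each with $F_{max}=D_j$ of codimension two, so no lower-order terms) and then exploiting the Batyrev relation together with homotopy invariance and multiplicativity of the Lagrangian Seidel morphism. The only difference is cosmetic and lies in the completeness step, where you count a $\Lambda_L$-basis of Stanley--Reisner reduced monomials instead of invoking the Siebert--Tian-style Lemma \ref{quantumhomologyrelations}; both arguments rest on the same observation that quantum corrections are of strictly lower order, handled by a decreasing induction on degree.
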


First, write
$$H_*(L;\Z_2)=\frac{\Z_2[X_1,...,X_d]}{\la f_1,...,f_r\ra}$$
where $f_1,...,f_k$ denote the polynomial relations in the $X_i$.
For $j=1,...,k$ let $f^Q_j$ to be the polynomials in the $X_i$ variables where the standard product is replaced by the quantum product.
The following two results are straighforward adaptations of lemmata  \cite[Lemmata]{SiebertTian}. We prove them for conveniency:
%Let $\ov{X}_1,...,\ov{X}_n$ denote the generators of $H_*(L,\Z_2)$ seen as elements of $QH_*(L,\Lambda)$. First, we show the following:

\begin{lem} The elements $X_1,...,X_d$ generate $QH_*(L;\Lambda_L)$. 
 \end{lem}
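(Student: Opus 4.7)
The plan is to adapt the Siebert--Tian style argument from the absolute toric case. By Haug's theorem there is a canonical $\Lambda_L$-module isomorphism $QH_*(L;\Lambda_L)\cong H_*(L;\Z_2)\otimes_{\Z_2}\Lambda_L$, so it is enough to show that every classical homology class of $L$ lies in the subring $\mathcal{A}\subset QH_*(L;\Lambda_L)$ generated, under the quantum product, by $X_1,\ldots,X_d$ together with $q^{\pm 1}$. The classical presentation
$$H_*(L;\Z_2) \;\cong\; \Z_2[X_1,\ldots,X_d]\big/\bigl(P_L(\Delta)+SR_L(\Delta)\bigr)$$
already tells us that the $X_i$ generate the classical intersection ring, so every $Y\in H_*(L;\Z_2)$ can be written $Y=p(X_1,\ldots,X_d)$ as a polynomial for the classical product.

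The central mechanism is that passing from the classical product to the quantum product only introduces corrections of strictly higher classical homological degree. Concretely, for any monomial,
$$X_{i_1}*\cdots *X_{i_k} \;=\; X_{i_1}\cdots X_{i_k} \;+\; \sum_{\ell\geq 1} c_\ell\, q^{-\ell N_L},$$
where each pearl-counted contribution $c_\ell$ lies in $H_{|c_0|+\ell N_L}(L;\Z_2)$, with $c_0$ the classical leading term; this is forced by the fact that the quantum product is a graded $\Lambda_L$-linear operation with $|q|=1$. Finite-dimensionality of $L$ ensures that only finitely many $\ell$ contribute. Writing $\tilde p$ for the same polynomial $p$ evaluated with the quantum product, iterating this identity yields
$$Y - \tilde p(X_1,\ldots,X_d) \;=\; \sum_{\ell\geq 1} Z_\ell\, q^{-\ell N_L},$$
with each $Z_\ell\in H_{|Y|+\ell N_L}(L;\Z_2)$.

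The argument is closed by a downward induction on $|Y|$. For $Y$ of top degree $n=\dim L$ the corrections would land in homological degrees exceeding $n$ and therefore vanish, so $Y=\tilde p(X_1,\ldots,X_d)\in\mathcal{A}$ directly. Assuming every class of classical degree strictly greater than $|Y|$ already lies in $\mathcal{A}$, the inductive hypothesis applies to each $Z_\ell$, and one concludes
$$Y \;=\; \tilde p(X_1,\ldots,X_d) - \sum_{\ell\geq 1} Z_\ell\, q^{-\ell N_L} \;\in\; \mathcal{A}.$$
The one point that requires care, rather than a genuine obstacle, is verifying the ``upper-triangular'' form of the quantum correction displayed above: this relies on the grading convention of the pearl complex together with Haug's result that $d_Q$ generically vanishes on the standard torus-invariant data, which identifies the classical product with the $q^0$-component of the quantum product and rules out non-classical contributions at the leading order.
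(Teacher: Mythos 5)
Your argument is correct and is essentially the paper's own proof (which likewise adapts the Siebert--Tian argument): a downward induction on classical degree, using that the quantum product of the $X_i$ equals their classical product plus correction terms of strictly higher classical degree, which are then absorbed by the induction hypothesis. The only cosmetic difference is that you phrase the induction over arbitrary classes $Y$ and invoke Haug's wideness theorem explicitly to identify $QH_*(L;\Lambda_L)$ with $H_*(L;\Z_2)\otimes\Lambda_L$, whereas the paper runs the induction over monomials $X^I$ and uses that identification implicitly.
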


\proof Assume for simplicity that $X_1,...,X_d$ are critical points of a perfect Morse function on $L$. The proof is by decreasing induction on the degree of pure elements, starting at degree $deg=2n$. For $deg=2n$, since $N_L\geq 2$, the quantum differential $\left.d_Q\right|_{R\la Crit_{2n}(f) \ra}$ restricts to  $d_0$. Consequently, the unique maximum of $f$ defines an element $QH_{n}(L;\Lambda_L)$, namely $[L]$.
%\begin{rem}\label{Maximum} Recall that $N_L\geq 2$, hence for degree reasons $\left.d_Q\right|_{R\la Crit_{2n}(f) \ra}\equiv d_0$. Consequently if  $Max$ is the unique maximum of  $f$, $Max$ defines an element of $QH_{n}(L,\Lambda)$ denoted $[L]$. 
%\end{rem}

 Assume that pure elements of degree upto $deg$ are generated by $X_1,...,X_d$: we show that every monomial $X^I$ of degree $deg-1$ is generated by the  $X_k$. Let $X^{I,Q}$ denote the element obtained by making the Lagrangian quantum  product of $X_k$ with multi-index $I$. By definition of the quantum product we have that:
$$X^{I,Q}=X^I+ \sum_{j\geq 1, |R|\geq deg} \lambda_{R,j} X^{R}t^j,\quad \lambda_{R,j}\in \Z_2.$$
  It follows from the induction hypothesis that the $X^R$ in the equation above can be written as quantum products of the $X_k$; hence the conclusion.
\qed\\

Argumenting as in the preceding lemma, we obtain that:
\begin{eqnarray*}f^Q_j(X_1,...,X_d) & = & f_j(X_1,...,X_d)+ g^Q_j(X_1,..,X_d)\\
&=& g^Q_j(X_1,..,X_d)
\end{eqnarray*}
since $f_j$ is assumed to be a relation in homology. Thus, the polynomial in abstract variables $q_1,...,q_d$:
\begin{equation*}
f_j^{[\om]}(q_1,...,q_d):= f^Q_j(q_1,...,q_d)- g^Q_j(q_1,...,q_d)
\end{equation*}
define relations in the quantum homology when  we evaluate them at $(X_1,...,X_d)$.
 We have the following:
\begin{lem} \label{quantumhomologyrelations}
$$QH_*(L,\Lambda_L)=\left.\Z_2[q,q^{-1}] ([X_1,...,X_d])\right/ \la f^{[\om]}_1,...,f^{[\om]}_r\ra.$$
\end{lem}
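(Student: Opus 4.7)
The plan is to build the natural surjective $\Lambda_L$-algebra map from the right-hand side to $QH_*(L;\Lambda_L)$ and then establish injectivity by a basis comparison. By the preceding lemma the $X_i$ generate $QH_*(L;\Lambda_L)$ as a $\Lambda_L$-algebra, so evaluation at the quantum classes defines a surjective $\Lambda_L$-algebra homomorphism $\Phi\colon\Lambda_L[X_1,\ldots,X_d]\twoheadrightarrow QH_*(L;\Lambda_L)$. The computation just above, $f_j^Q(X_1,\ldots,X_d) = g_j^Q(X_1,\ldots,X_d)$, says precisely that $f_j^{[\om]}\in\ker\Phi$, so $\Phi$ descends to a surjection
\[
\ov\Phi \colon A := \Lambda_L[X_1,\ldots,X_d]/\la f_1^{[\om]},\ldots, f_r^{[\om]}\ra \twoheadrightarrow QH_*(L;\Lambda_L).
\]

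The target is controlled by Haug's wideness theorem: $QH_*(L;\Lambda_L)\cong H_*(L;\Z_2)\otimes_{\Z_2}\Lambda_L$, a free $\Lambda_L$-module of rank $b := \dim_{\Z_2} H_*(L;\Z_2)$. I would fix a monomial $\Z_2$-basis $\{X^{I_\alpha}\}_{\alpha=1}^{b}$ of $H_*(L;\Z_2) = \Z_2[X_1,\ldots,X_d]/\la f_1,\ldots,f_r\ra$; its images under $\ov\Phi$ then form a $\Lambda_L$-basis of $QH_*(L;\Lambda_L)$. If I can show that these same $b$ monomials also span $A$ as a $\Lambda_L$-module, then $\ov\Phi$ carries a $\Lambda_L$-generating family onto a $\Lambda_L$-basis of equal finite cardinality, which is enough to force $\ov\Phi$ to be an isomorphism (any nontrivial $\Lambda_L$-relation among the $X^{I_\alpha}$ in $A$ would survive in the target, contradicting independence there).

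The spanning claim for $A$ is the real content, and I would prove it by descending induction on the classical degree $|I|_{cl}:=(n-1)|I|$ of a monomial $X^I$, which is bounded above by $\dim L = n$. In the classical ring $\Z_2[X]/\la f_j\ra$, iterated use of $f_j=0$ expresses any $X^I$ as a $\Z_2$-combination of the $X^{I_\alpha}$; performing the same syntactic substitutions inside $A$, but now using the identity $f_j = g_j^Q$ inherited from $f_j^{[\om]}=0$, produces correction terms of the form $q^{-m}X^R$ where, by the expansion established in the proof of the preceding lemma, $X^R$ has \emph{strictly higher} classical degree than the monomials of $f_j$ while the power of $q^{-1}$ exactly compensates to preserve the total grading. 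Since only finitely many classical degrees are available, the rewriting terminates with a $\Lambda_L$-linear expression in the $X^{I_\alpha}$.

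The main obstacle will be making this reduction procedure rigorous and verifying termination: one must simultaneously manage the preserved total grading and the strictly ascending classical grading, and check that each wave of corrections $g_j^Q$ can itself be reduced without reintroducing monomials of lower classical degree. Once this termination is established, freeness of $QH_*(L;\Lambda_L)$ together with the spanning-to-basis comparison yields injectivity of $\ov\Phi$ and therefore the claimed presentation.
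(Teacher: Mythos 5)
Your proposal is correct in substance but organized differently from the paper's argument. The paper (following Siebert--Tian) works directly with the ideal of quantum relations: given any polynomial $P^Q$ vanishing in $QH_*(L;\Lambda_L)$, it isolates the lowest-degree homogeneous piece $P_{deg}$, observes via the triangularity of the quantum product that $P_{deg}$ must already be a \emph{classical} relation, hence of the form $\phi(f_1,\dots,f_r)$, and then subtracts $\phi(f_1^{[\om]},\dots,f_r^{[\om]})$ to replace $P^Q$ by a relation supported in strictly higher degree; boundedness of the grading terminates the induction and shows the relation ideal is exactly $\la f_1^{[\om]},\dots,f_r^{[\om]}\ra$. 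You instead construct the surjection $\ov\Phi$ onto $QH_*(L;\Lambda_L)$ and prove injectivity by a module comparison: a classical monomial basis of $H_*(L;\Z_2)$ spans the quotient $A$ over $\Lambda_L$ (by a rewriting induction on degree using $f_j=g_j^Q$), while its image is a $\Lambda_L$-basis of the target by Haug's wideness theorem, which forces $\ov\Phi$ to be an isomorphism. Both proofs rest on the same two pillars --- the triangularity $X^{I,Q}=X^I+(\text{higher-order corrections})$ guaranteed by monotonicity, and termination of a degree induction --- so neither is more powerful, but your version makes the reliance on wideness (freeness of $QH_*(L;\Lambda_L)$ as a $\Lambda_L$-module) explicit, whereas the paper's version never needs to count ranks and would survive in settings where one only knows the classical presentation of the relation ideal. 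The step you flag as the main obstacle (termination of the rewriting) is in fact immediate: every correction term carries $\mu_L(B)>0$, so its classical degree strictly increases while total degree is preserved, and the classical degree of a nonzero monomial is bounded; your degree bookkeeping ($|I|_{cl}=(n-1)|I|$ bounded by $n$) is garbled --- the degree of $X^I$ is $n-|I|$, bounded below by $0$ --- but this does not affect the argument.
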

 \proof We have seen that the $f_j^{[\om]}$ define relations in Quantum Homology. Let $\mathcal{I}$ denote the ideal generated by the $f_j^{[\om]}$, $j=1,...,r$. We show that any polynomial defining a relation in the quantum homology actually belongs to $\mathcal{I}$. Let $P^Q\in \mathcal{I}\bs \{0\}$ be of degree $deg$ (in the abstract variables $q_1,...,q_n$). Then, we can write $P^Q= P^Q_{deg}+R$ where $P^Q_{deg}$ is the degree $deg$ term and where $\deg R>deg$. Since $P^Q$ defines a relation, evaluating at $X_1,...,X_d $ gives:
 $$P^Q_{deg}(X_1,...,X_d)=-R(X_1,...,X_d).$$
 By definition of the quantum product, $P^Q_{deg}(X_1,...,X_d)$ can be written as a sum of a degree $deg$ polynomial  $P_{deg}(X_1,...,X_d)$ (where the product is the intersection product) and a polynomial of degree bigger  than $deg$. Thus,  $P_{deg}(X_1,...,X_d)=0$ which implies that there is a polynomial function $\phi$ such that $P_{deg}=\phi(f_1,...,f_r)$. Again, replacing the standard product by the quantum product gives:
 $$\phi(f_1^{\om},...,f_r^{\om})=P^Q_{deg}+R',\quad \deg R' >deg. $$
 This implies that $P^Q=\phi(f_1^{\om},...,f_r^{\om})+R-R'$, with $\deg (R-R')>deg$. To finish the proof we do a decreasing induction on the degree. 
\qed\\

\proof (Proposition \ref{Quantumpresentation})
Now, we  show how to use the formula for relative Seidel morphisms   in order to compute  $QH_*(L;\Lambda_L)$. For  a primitive $I=(i_1,...,i_l)\subset [1,d]$, let $J=(j_1,...,j_m)$ denote its complement in $[1,d]$. Consider the unique vector $a_I=(a_1,...,a_d)\in H_1(L;\Z_2)$ 
such  that:
 \begin{equation}\label{relationBatyrev2} \textrm{$a_k=1$ for all $k\in I$},\quad \textrm{$a_k\leq 0$ for $k\notin I $},\quad \sum a_kv_k=0.
 \end{equation}
  Let $\Lambda^{1/2}_j$ denote the half-turn map associated  to $\Lambda_j$ the $S^1$--action generated by the normal to the $j$--face of the Delzant polytope. Note that from \eqref{actionVSinvolution} we have  $\Lambda^{1/2}_j\in \P_L\Ham(M,\om)$.  Thus,   in terms of Hamiltonian paths preserving $L$, \eqref{relationBatyrev2} means that:
%\begin{equation*} \Lambda_1^{a_1}\cdots \Lambda_d^{a_d}=Id_M.
%\end{equation*}
% Let $\Lambda^{1/2}_j$ denote the half-turn map corresponding to $\Lambda_j$. From \eqref{actionVSinvolution} we have  $\Lambda^{1/2}_j\in \P_L\Ham(M,\om)$  and we get the following relation:
  \begin{equation}\label{relationchemin} (\Lambda^{1/2}_1)^{a_1}\cdots (\Lambda^{1/2}_d)^{a_d}=Id_L.
\end{equation}
Since we are dealing with a torus action, the order of the terms in the left member of \eqref{relationchemin} can be reorganized in such way that we finally get:
  \begin{equation}\label{relationchemin2} (\Lambda^{1/2}_{i_1})\cdots (\Lambda^{1/2}_{i_l})=  (\Lambda^{1/2}_{j_1})^{-a_{j_1}}\cdots (\Lambda^{1/2}_{j_m})^{-a_{j_l}}=(\Lambda^{1/2}_{j_1})^{|a_{j_1}|}\cdots (\Lambda^{1/2}_{j_l})^{|a_{j_m}|}.
\end{equation}
Observe that the maximum fixed point set of each $\Lambda_j$ is given by the divisor $D_j$, hence is of codimension 2, and both assumptions \emph{A1} and \emph{A2} are verified in this context. It follows from  Theorem \ref{calculelementdeSeidel} that $S_L(\Lambda^{1/2}_{j})=X_j\otimes q$. Thus,  considering the Lagrangian Seidel element associated to both sides in \eqref{relationchemin2} we have:
%  \begin{equation*}Id= S_L(\Lambda^{1/2}_1)^{a_1}\star\cdots\star S_L(\Lambda^{1/2}_d)^{a_d}=(X_1\otimes q)^{a_1,Q}\star \cdots \star (X_d\otimes q)^{a_d, Q}.
%%\mathbbm{1}.
%\end{equation*}
%From Theorem \ref{calculelementdeSeidel} we get the quantum relations:
%  \begin{equation*} (X_1^{a_1, Q}\star \cdots \star X_d^{a_d, Q})\otimes q^{\sum_{j=1}^d a_j}=Id.
%\end{equation*}
\begin{equation*}X_{i_1}\star\cdots \star X_{i_l}\otimes q^l=X_{j_1}^{|a_{j_1}|}\star\cdots\star X_{j_m}^{|a_{j_m}|}\otimes q^{\sum_{r=1}^m |a_{j_r}|}
\end{equation*}
where $\star$ stands for the Quantum Lagrangian product (see \cite{BiranCorneaQuantumHomology}).  As a consequence of Lemma \ref{quantumhomologyrelations},  these are the only multiplicative relations in $QH(L;\Lambda_L)$. Furthermore, the additive relations are the same as in standard homology. The presentation of $QH(L;\Lambda_L)$ follows.
\qed\\
% \subsection{Computing the Hofer length of the faces}
 
 \begin{rem} \begin{itemize}
 \item[$\bullet$]
 This presentation is the same as given in \cite[Proposition 5.2]{McDuffTolman} for the toric manifold $(M,\om)$. Let 
 $\Lambda_M:=\Z_2[Q^{-1},Q]$ with $|Q|=2$, and  denote by $QH(M;\Lambda_M)$ the corresponding quantum homology. Then, as above, 
 $$QH(M;\Lambda_M)\cong\frac{\Z_2[Y_1,...,Y_n][Q^{-1},Q]}{P(\Delta)+SR^Q(\Delta)},$$
 where $Y_i$ represents the class of the divisor $D_i$, $P(\Delta)$ stands for the set of  linear relations between the divisors, and $SR^Q(\Delta)$ stands for the set of quantum multiplicative relations between the divisors.  
It follows directly that there is a  ring isomorphism:
 $$\psi: QH_*(L;\Lambda_L)\sra QH_{2*}(M;\Lambda_M)$$ 
 such that $\psi(X_i)=Y_i$ and $\psi(q)=Q$. This ring isomorphism is actually induced by the inclusion $incl$ in \ref{theoremhomologielag} as was shown by L.Haug in \cite{Haug}. 
 It is worth noticing that, in the notations above,  $\psi(S_L(\Lambda^{1/2}_j))=S(\Lambda_j)$, i.e. relative Seidel elements associated to loops dual to facets are sent to the corresponding absolute  Seidel elements under $\psi$. 
 \item[$\bullet$] Using the $QH(M;\Lambda_M)$-module structure of the quantum homology of $L$ one also has \cite{BiranCorneaUniruling, HuLalonde}
 $$S_L(\Lambda_j^{1/2})\star S_L(\Lambda_j^{1/2})= S_L((\Lambda_j^{1/2})^2)=S (\Lambda_j)\odot [L],$$
in other words  $S_L(\Lambda_j^{1/2})$ is somewhat a square root of $S (\Lambda_j)$.  As it was pointed out to me by Fran\c{c}ois Charette,   these relations completely determine the $QH(M;\Lambda_M)$-module structure of $QH(L;\Lambda_L)$. Moreover, it is possible to recover the $H(M)$--module structure of $H(L)$. For instance,  when $N_L\geq 3$ the quantum product of $X_i$ with itself does not admit any quantum correction term. Hence,
\begin{equation*} S_L(\Lambda_j^{1/2})\star S_L(\Lambda_j^{1/2})=(X_i\otimes q)\star (X_i\otimes q) = (X_i\star X_i) q^2= X_i^2 q^2
\end{equation*}
and since $$S (\Lambda_j)\odot [L]=Y_iQ\odot [L]=(Y_i\odot[L])Q=(Y_i\odot[L])q^2$$ we get $X_i^2 =Y_i\odot[L]$. 
% When $N_L=2$ one can see that  cannot avoid the presence of terms involving the class $[L]$.
% as is shown in the following example. Let $\om_0$ be an area form of area 1 on $S^2=\C\cup \{\infty\}$. Consider $S^2\times S^2$ with symplectic form $\om_0\oplus\om_0$ and with monotone Lagrangian  torus $L=S^1\times S^1$ where each $S^1$ is the fixed meridian $\R\cup\{\infty\}$ in its $S^2$ component. This is a toric symplectic manifold with action
%$$(\theta_1,\theta_2)\cdot ([z_0:z_1],[w_0:w_1])=([z_0:e^{-2\pi i\theta_1}z_1],[w_0:e^{-2\pi i\theta_2}w_1])$$
%The moment map is given by
%$$\mu([z_0:z_1],[w_0:w_1])=(\pi \frac{|z_1|^2}{|z_0|^2+|z_1|^2},\pi \frac{|w_1|^2}{|w_0|^2+|w_1|^2})$$
%with image a square with side of length $\pi$. 
\end{itemize}
  \end{rem}

 We end this Section with an example illustrating the results established so far.

\begin{ex}\label{Theexample}
Consider the pair $(\C P^3, \R P^3)$, where $\C P^3$ is equipped with the Fubini-Study form $\om_{FS}$. Let $(M,L)=(\wt{\C P^3}, \wt{\R P^3})$ denote the monotone Lagrangian blow-up of $(\C P^3, \R P^3)$  with symplectic form $\wt{\om}$ (see \cite{Rieser} for the definition). Topologically 
$$\wt{\C P^3}\cong\C P^3 \#\ov{\C P}^3\qquad \text{and}\qquad \wt{\R P^3}\cong\R P^3 \#\R P^3.$$
One can also view $\wt{\C P^3}$ as the  the projectivisation of the rank 2 complex bundle $\mathcal{O}(-1)\oplus \C \sra \Sigma$ where $\Sigma\cong \C P^2$ denotes the exceptional divisor. In this point of view, $\wt{\R P^3}$ is a non trivial $S^1$--bundle over $\R P^2$. The group $H_2(M;\Z_2)$ is generated by the class $F$ of the fiber of this fibration and the class $E$ of the exceptional curve ( $E=L-F$ where $L=[\C P^1]$ is the class of a line.). A simple computation shows that $H^D_2(M, L;\Z_2)$ is  generated by  half of   $E$ and half of $F$. We will use the same notations to refer to them.
 
Now, the  symplectic form for the blow-up of weight $\lambda$ is  given by $[\wt{\om}]=[\phi^*\om_{FS}]-\pi\lambda^2 e$, where   $\phi : \wt{\C P^3}\sra \C P^3 $ is the blowing-down map, and where  $e\in H^2(\wt{C P}^3,\Z)$ is the  Poincar\'e dual of $\Sigma$.
  % which is a diffeomorphism outside the exceptional divisor and which sends $\Sigma$ to a point. 
  Monotonicity then forces  $\lambda$ to be $\sqrt{2}/2$.  The torus $\T^3$ acts in a Hamiltonian way on $\C P^3$ as follows
$$(\theta_1,\theta_2,\theta_3)\cdot [z_0:z_1:z_2:z_3]=[z_0:e^{-2\pi i\theta_1}z_1:e^{-2\pi i\theta_2}z_2:e^{-2\pi i\theta_3}z_3]$$ The moment map of this action is 
$$\mu([z_0:z_1:z_2:z_3])=\left(\frac{\pi |z_1|^2}{\sum_{i=0}^3|z_i|^2},\frac{\pi |z_2|^2}{\sum_{i=0}^3|z_i|^2},\frac{\pi |z_3|^2}{\sum_{i=0}^3|z_i|^2}\right)$$
 so that the moment polytope is given by
$$\Delta=\{(x_1,x_2,x_3)\in \R^3|0\leq x_i, \,\,x_1+x_2+x_3\leq \pi\}.$$
This action lifts to a Hamiltonian action of $\T^3$ on the blow-up with moment map $\wt{\mu}$. The corresponding moment polytope   can be identified with
$$\wt{\Delta}=\{(x_1,x_2,x_3)\in \R^3|0\leq x_1,\,\,0\leq x_2,\,\, 0\leq x_3\leq \pi/2, \,\,x_1+x_2+x_3\leq \pi\}.$$
Moreover, the restriction of $\wt{\mu}$ to $\wt{\R P}^3$ has also image $\wt{\Delta}$. Now, the outward normals to the facets are 
$$v_1=(-1,0,0),\,\,v_2=(0,-1,0),\,\, v_3=(0,0,-1),\,\, v_4=(0,0,1),\,\, v_5=(1,1,1).$$
We introduce some more notations. Let  $\Lambda_i$, $i=1,...,5$ denote the semi-free  Hamiltonian circle action  fixing the  facets defined by  the $v_i$'s. Let $\Lambda_i^{1/2}$ denote the Hamiltonian path corresponding to half of $\Lambda_i$ and let $X_i$ be formal variables representing the intersection of the divisors associated to the facets with normals $v_i$ with $L$.  To compute the Quantum Homology of $L$ first note that 
%$$S_L(\Lambda_i^{1/2})=X_i\otimes q$$
%where   $X_1,\dots, X_5$   represent the intersection of the divisors associated to the facets with normals $v_1,\dots, v_5$ with $L$. In Section 5 we will see that 
%$$H_*(\wt{R P}^3;\Z_2)\cong \frac{\Z_2[X_1,\dots, X_5]}{\la P(\Delta)+SR(\Delta)\ra}$$
%where $P(\Delta)$ and $SR(\Delta)$ are given by:
$$P(\wt{\Delta})=\la X_1=X_2=X_5,\,\, X_3=X_4+X_5 \ra,\qquad  SR(\wt{\Delta})=\la X_1X_2X_5=0,\,\, X_4(X_4+X_5)=0 \ra.  $$
Setting $X=X_1$ and $Y=X_4$ and applying Theorem \ref{Quantumpresentation} yields:
 $$QH(L;\Lambda_L)=\Z_2[X,Y, q^{\pm1}]/\la X^3=Y q^{-2}, Y(X+Y)=[L]\otimes q^{-2} \ra$$
 which is indeed isomorphic to $QH(M;\Lambda_M)$. 
 
 It is not hard to see that the product $YX$ has no quantum term. Set $YX=\partial  E$, then $Y\star Y=\partial E+[L]\otimes q^{-2}$. Thus, $$S_L((\Lambda_4^{1/2})^2)=(S_L(\Lambda_4^{1/2}))^2=(Y\star Y)\otimes q^2=E\otimes q^2+[L]$$
 and we see a lower order term appearing. Note however that the action of   $(\Lambda_4)^2$ is not semi-free on the maximum subset. 
 
  Finally, we wish to show that lower order terms may appear when the maximum fixed point set is of codimension strictly greater than 2 in $M$.   Consider the circle  action $\Lambda$ associated to the combination $v_1+v_2+v_4$. The maximum fixed point set is semifree and corresponds to the  point mapped to the intersection $D_1\cap D_2\cap D_4$ under $\wt{\mu}$. Then, we have
\begin{equation*}
S_L(\Lambda)=S_L(\Lambda_1^{1/2})\star S_L(\Lambda_2^{1/2})\star S_L(\Lambda_4^{1/2})= (X\star X\star Y)\otimes q^3.
\end{equation*}
It is not hard to check that $X\star X$ coincides with the intersection product $X\cdot X=\partial F$. In order to compute the Lagrangian  quantum product $(X\cdot X)\star Y $, observe that 
$$(X\cdot X)\star Y =[pt]+\alpha Y\otimes q^{-2}+\beta X\otimes q^{-2},\qquad \alpha,\beta\in \{0,1\}$$
for dimensional reasons and since $F$ and $E$ are the only effective  Maslov 2 classes. By a direct computation one has $\alpha=1$, hence the conclusion. 
%&=& ([pt]+Y\otimes q^{-2})\otimes q^3\\
%&=&[pt]\otimes q^3+X_4\otimes q.
\end{ex}

 \subsection{Lagrangian uniruledness}

 Recall that $H_n(L)\otimes\Lambda_L$ embeds in  $QH(L;\Lambda_L)$ canonically.  We set $Q_-$ to be the complement:
 $$Q_-=H_*(\oplus_{k<n} Crit_k(f)\otimes \Lambda_L, d^Q).$$ 
 
 \begin{lem} \label{unirulingandinvertibles} Let $L\subset M$ be a closed monotone Lagrangian with $N_L\geq 2$. Assume that $L$ is not narrow.  If $L$ is not Lagrangian uniruled all the invertible elements of $QH(L,\Lambda_L)$ can be written as 
 $$\lambda[L]+ x,\quad\text{where $\lambda\in \Lambda_L\bs\{0\}$ et $x\in Q_-$}.$$
 \end{lem}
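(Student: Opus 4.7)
The plan is to use non-uniruledness to exhibit $Q_-$ as a two-sided ideal of $QH(L;\Lambda_L)$, so that the quotient ring is $\Lambda_L\cdot[L]$; an invertible element must then project non-trivially onto this quotient, which will yield the claimed decomposition.

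First I would fix auxiliary data carefully: since $L$ is not uniruled, for a generic pearl-regular $J\in\J(M,\om)$ there is a dense set of points $p\in L$ avoided by every non-constant $J$-holomorphic disk with boundary on $L$. I choose such $(J,p)$ together with a Morse--Smale pair $(f,g_L)$ whose unique maximum $x_{\max}$ (using that $L$ is connected, so $|\mathrm{Crit}_n(f)|=1$) is exactly $p$, and set $C_{<n}:=\bigoplus_{k<n}\Z_2\la\mathrm{Crit}_k(f)\ra\otimes\Lambda_L$. The key geometric observation is that any pearl trajectory, or Y-shaped configuration computing $\star$, whose output terminates at $x_{\max}$ must end with a negative-gradient flow line approaching $x_{\max}$; since $x_{\max}$ is a maximum of $f$, this terminal flow can only be the constant path at $x_{\max}$, which forces the last $J$-disk of the configuration to pass through $p=x_{\max}$, contradicting the choice of $p$. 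Hence the $x_{\max}$-coefficient of $d_Q(y)$ vanishes for every $y\in C_{<n}$, and since a degree count gives $d_Q(x_{\max}\otimes\Lambda_L)=0$ outright (no critical points of index exceeding $n$ exist), $C_{<n}$ is a subcomplex.

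From this follows a split short exact sequence of complexes, which descends (since $L$ is not narrow, so $[L]\neq 0$ in $QH(L;\Lambda_L)$) to a split short exact sequence of $\Lambda_L$-modules
\begin{equation*}
0\lra Q_-\lra QH(L;\Lambda_L)\stackrel{\pi}{\lra}\Lambda_L\lra 0,
\end{equation*}
splitting via $\lambda\mapsto\lambda[L]$. The same terminal-flow argument applied to Y-configurations shows $\alpha\star\beta\in Q_-$ whenever either $\alpha$ or $\beta$ lies in $Q_-$, so $\pi$ is a ring homomorphism. An invertible $u\in QH(L;\Lambda_L)$ then projects to an invertible, hence non-zero, element $\pi(u)\in\Lambda_L$; setting $\lambda:=\pi(u)$ and $x:=u-\lambda[L]$ gives $x\in\ker\pi=Q_-$ and hence $u=\lambda[L]+x$ with $\lambda\in\Lambda_L\bs\{0\}$, as required. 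The main obstacle is the careful verification of the geometric claim that every such pearl or Y-configuration landing on $x_{\max}$ is obstructed by the absence of a disk through $p$, which requires combining non-uniruledness with the standard Biran--Cornea pearl transversality and handling the degenerate case of constant disks (whose output at a maximum likewise forces constant gradient trajectories).
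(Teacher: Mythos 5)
Your proof is correct and follows essentially the same route as the paper's: you show $Q_-$ is an ideal by observing that any pearl or product configuration contributing an $[L]$-term must pass through the maximum of $f$, which can be placed at a generic point avoided by all non-constant $J$-disks, and you then conclude that an invertible must project to a nonzero element of the quotient $\Lambda_L\cdot[L]$. The paper's own argument is terser (it does not spell out that $C_{<n}$ is a subcomplex, nor the constant-terminal-flow-line observation, and simply notes that a contributing class $B$ must be nonzero for degree reasons), but the underlying mechanism is identical.
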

 
 \proof Assuming $L$ is not uniruled we show that $Q_-$ is an ideal in $QH(L,\Lambda_L)$. Suppose it is not the case, then there would $x\in QH(L,\Lambda_L)$ and $y\in Q_-$ such that $x\star y$ has a term of the type $r[L] t^{\mu(B)}$ with
 $$r=\#_2\{\M^{pearl}(x,y;[L], B)\}\neq 0.$$ Note that $B\neq 0$ since in that case $r=0$ unless both $x$ and $y$ have index $n$. It follows by assumption that   $Q_-$ is an ideal.  Since the unit $[L]$ cannot belong to $Q_-$ unless $QH(L;\Lambda_L)$ vanishes, any invertible must have such a presentation.
 \qed\\
 
Now we are prepared to prove Theorem \ref{Lagrangianuniruledness}.
 
 \begin{proof}[Theorem \ref{Lagrangianuniruledness}]
 Assume $L$ is not narrow, then the claim follows from Lemma \ref{unirulingandinvertibles}. If $L$ is narrow, then the fundamental class $[L]$ is a $d^Q$--boundary. Since $[L]$ is represented by the unique maximum of some generic Morse function on $L$, this implies that there is a pseudo-holomorphic disk through the maximum, which ends the proof.
 \end{proof}
 
 Here is another consequence of the Lemma above
 
 \begin{cor} Any real  monotone Lagrangian with $N_L\geq 2$ in a Toric manifold is Lagrangian uniruled.
   \end{cor}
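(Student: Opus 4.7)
The plan is to directly invoke Theorem \ref{Lagrangianuniruledness} by exhibiting, for any real monotone Lagrangian $L \subset M$ in a toric manifold, an explicit element $\gamma \in \P$ satisfying assumptions (A1) and (A2). The natural candidate is the half-turn $\Lambda_j^{1/2}$ associated to any facet $D_j$ of the moment polytope $\Delta$, that is, the path that is half of the $S^1$-Hamiltonian action $\Lambda_j$ generated by the normal vector $v_j$ to $D_j$.

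First I would verify that $\Lambda_j^{1/2} \in \P_L\Ham(M,\om)$, i.e.\ that it preserves $L$ at time one. This follows from the anti-invariance relation \eqref{actionVSinvolution}: for $x \in L = \mathrm{Fix}(\tau)$ and $w = \Lambda_j^{1/2}$, one has $\tau(w\cdot x) = w^{-1}\cdot \tau(x) = w^{-1}\cdot x$, so $w\cdot x \in L$ iff $w^2 \cdot x = x$, which is certainly true for the full action $\Lambda_j$ applied at time $1$ since $\Lambda_j$ is an $S^1$-action. Since $(\Lambda_j^{1/2})^2 = \Lambda_j$ defines an honest $S^1$-Hamiltonian action, $\Lambda_j^{1/2} \in \P$.

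Next I would check (A1) and (A2) for the action $\Lambda_j$. The maximum fixed point set of $\Lambda_j$ is exactly the toric divisor $D_j$, which has complex codimension $1$ in $M$, and the action is semifree in a neighbourhood of $D_j$ by construction (the weights normal to $D_j$ are $\pm 1$), giving (A1). For (A2), the intersection $F_{max}^L = D_j \cap L$ is the real Lagrangian inside $D_j$ viewed as a toric subvariety; it is clean and is a Lagrangian submanifold of $D_j$ of half the real dimension. This is precisely the situation already highlighted in Example \ref{Theexample} and in the paragraph following Theorem \ref{calculelementdeSeidel}.

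With (A1) and (A2) verified, Theorem \ref{Lagrangianuniruledness} applies and yields directly that $L$ is Lagrangian uniruled. The only step that is not essentially a bookkeeping check is the verification that $\Lambda_j^{1/2}$ really sends $L$ to $L$; the main point is the conjugation identity \eqref{actionVSinvolution}, which makes the real Lagrangian invariant under every half-turn of the torus action along a facet. Since any toric manifold admits such facets, the hypothesis of Theorem \ref{Lagrangianuniruledness} is always satisfied and the corollary follows immediately, with no need to invoke the detailed structure of $QH(L;\Lambda_L)$ given by Proposition \ref{Quantumpresentation}.
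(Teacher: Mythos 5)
Your proof is correct, and it reaches the conclusion by the route the paper makes available but does not quite take: you verify that the facet half-turns $\Lambda_j^{1/2}$ lie in $\P$ (via the conjugation identity \eqref{actionVSinvolution}) and satisfy (A1) and (A2), and then quote Theorem \ref{Lagrangianuniruledness} as a black box. The paper instead argues one level lower: it uses the explicit computation $S_L(\Lambda_j^{1/2})=X_j\otimes q$ from Theorem \ref{calculelementdeSeidel}, notes that this invertible element has no $[L]$--component, and concludes directly from Lemma \ref{unirulingandinvertibles}. The two arguments rest on exactly the same facts, since the proof of Theorem \ref{Lagrangianuniruledness} itself reduces to Lemma \ref{unirulingandinvertibles} in the non-narrow case. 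The one substantive difference is that Lemma \ref{unirulingandinvertibles} assumes $L$ is not narrow, so the paper's shortcut implicitly leans on Haug's wideness theorem (or on the separate narrow-case argument inside Theorem \ref{Lagrangianuniruledness}) to dispose of that case; your version inherits the narrow-case handling automatically from Theorem \ref{Lagrangianuniruledness}, which makes it marginally more self-contained. Your verifications of (A1) (normal weight $\pm 1$ along a facet of a Delzant polytope) and (A2) ($D_j\cap L$ is the real part of the toric divisor $D_j$) match the paper's assertion in the proof of Proposition \ref{Quantumpresentation} that both assumptions hold for facet actions.
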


\proof It  was seen that the  fundamental class of $[L]$ do not appear in any Lagrangian Seidel element associated to an $S^1$--circle action fixing one of the codimension one faces of the Delzant polytope. However, Lagrangian Seidel elements are all invertibles. The claim then follows from Lemma \ref{unirulingandinvertibles}.
\qed\\

%\section{Final comments and remarks}   Let $L$ be a compact oriented Lagrangian submanifold of $(M,\om)$ with fundamental class $[L]\in H_n(M,\Z)$. In \cite{Solomon}, the author defines a functional on the set of exact lagrangian paths with Lagrangians diffeomorphic to $L$.   Let $\beta$ be a closed $n$--form such that $\om\wedge \beta=0$, and assume that 
%$$\int_{[L]}\beta=0.$$
%Let $\Lambda=\{\Lambda_{t}\}_{t\in[0,1]}$ be an exact lagrangian path where each $\Lambda_t$ is diffeomorphic to $L$,  and let $h=\{h_t\}_{t\in [0,1]}$ be an associated family of Hamiltonian functions generating the path. 
%Then we define 
%$$C(\Lambda)=\int_0^1dt\int_{\Lambda_t}h_t\beta.$$
%Such functional only depends on homotopy class of $\Lambda$ with fixed endpoints, \cite{Solomon}. Taking $\Lambda_0=\Lambda_1=L$, it should be possible to detect whether $\Lambda$ is homotopic to the constant path or not.  
%
%Unfortunately, this cannot be used in the context of monotone real Lagrangians in Fano symplectic toric manifolds.  In this case the ambiant manifold is K\"ahler and $\beta$ can be chosen to be a harmonic representative of a primitive $n^{th}$ cohomology class.

\bibliographystyle{plain}
\bibliography{bibarticle}

\end{document}